\newtheorem{theorem}{Theorem}[section]
\newtheorem{lemma}[theorem]{Lemma}
\newtheorem{proposition}[theorem]{Proposition}
\newtheorem{corollary}[theorem]{Corollary}
\theoremstyle{definition}
\newtheorem{definition}[theorem]{Definition}
\newtheorem{remark}[theorem]{Remark}
\newcommand{\Av}{\mathrm{Av}}
\newcommand{\X}{\mathsf{X}}
\newcommand{\Fv}{\mathcal{F}_v}
\newcommand{\beq}{\begin{eqnarray*}}
	\newcommand{\feq}{\end{eqnarray*}}
\newcommand{\beqn}{\begin{eqnarray}}
	\newcommand{\feqn}{\end{eqnarray}}
\newcommand{\keywords}[1]{%
	\par\noindent\textbf{Keywords. }#1\par
}
\newcommand{\subjclass}[1]{\noindent\textbf{MSC 2020. }#1}
\title{Growing Avoiders from the Right: An Operator-Theoretic Approach}
\author{Reza~Rastegar\thanks{Woodinville, WA; e-mail: reza.j.rastegar@gmail.com}}
\begin{document}
	\maketitle
	
	\begin{abstract}
		Marcus and Tardos \cite{MarcusTardos2004} proved the Stanley--Wilf conjecture by
		reducing pattern avoidance to an extremal problem on $0$--$1$ matrices. We recast
		the same exponential-growth conclusion for classical permutation patterns inside
		a ``grow from the right'' operator-theoretic framework. A $v$-avoiding
		permutation is built by right insertion; at each step we keep a pruned family of
		locations of $(k{-}1)$-partial occurrences of $v$ (the \emph{frontier}), each
		carrying its forbidden rank interval. The insertion step then induces a
		nonnegative transfer operator on a doubly weighted $\ell^\infty$ space. A
		quadratic penalty in the length makes this operator bounded, and a Neumann-series
		argument on a natural separable predual yields a convergent operator-valued
		Neumann series and an operator-theoretic representation of the growth series in
		a disc; combined with the Marcus--Tardos exponential bound, this recovers the
		Stanley--Wilf finiteness conclusion for $\Av(v)$.
		
		The formulation is completely internal; we never pass to $0$--$1$ matrices—and it
		cleanly separates the pattern-dependent combinatorics of the frontier from a
		purely operator-theoretic core. In particular, we obtain an abstract
		``right-insertion/transfer-operator'' theorem: for any system whose frontier grows at
		most linearly and whose transfer operator satisfies a uniform quadratic
		length bound, the associated transfer operator is quasi-compact on a doubly
		weighted $\ell^\infty$ space, and the Neumann series for its adjoint converges
		in a disc. In particular, all growth series obtained by pairing with continuous
		functionals on the dual are analytic (Theorem~\ref{thm:RV-abstract}).
	\end{abstract}
	
	\keywords{permutation patterns, pattern avoidance, Stanley--Wilf conjecture, transfer operator, quasi-compact operator, frontier method, right-to-left growth, analytic combinatorics}
	
	\subjclass{05A05, 05A15, 37A30, 47A35}
	
	%\tableofcontents
	
	\section{Introduction}
	
	Permutation patterns form one of the central testing grounds for modern enumerative and structural combinatorics; see, for instance, the foundational work of Simion--Schmidt, B\'ona, Kitaev, and Vatter \cite{SimionSchmidt1985,BonaBook,KitaevBook,VatterSurvey}. 
	The starting point of this paper is the now-classical fact that for every fixed permutation pattern $v$ the number
	\[
	|\Av_n(v)| = |\{ \pi \in S_n : \pi \text{ avoids } v \}|
	\]
	grows at most exponentially fast in $n$. This is the content of the Stanley--Wilf conjecture, posed independently by Stanley and Wilf in the early 1980s and settled by Marcus and Tardos \cite{MarcusTardos2004} -- see also Theorem~\ref{thm:MT-operator}. Their proof recasts pattern avoidance as an extremal problem on $0$--$1$ matrices, proves a strengthened form of the F\"uredi--Hajnal conjecture, and then reads off exponential growth. Because the argument is short, elementary, and sharp enough to imply all then-known special cases (e.g.\ \cite{SimionSchmidt1985,Arratia1999,BonaBook}), it has become the standard reference; see Vatter's survey \cite{VatterSurvey} for a modern account and for the role of Marcus--Tardos in the structural theory of permutation classes.
	
	Our aim is not to improve on the Marcus--Tardos bound, nor to bypass matrix methods for their own sake, but to show that the same exponential-growth phenomenon \emph{for classical patterns} can be obtained inside a framework that is already familiar in enumerative combinatorics and in dynamical systems: ``grow the object from a boundary, keep a bounded amount of local data, and let a transfer operator act on a weighted function space.'' Variants of this philosophy occur in the transfer-matrix method for walks on strips \cite[Ch.~4]{FlajoletSedgewick}, in column-by-column growth of polyominoes and related lattice objects (see e.g.\ \cite{BousquetMelou1996,BousquetMelouFeretic1997}), and in the use of Ruelle--Perron--Frobenius operators to encode symbolic extensions of dynamical systems \cite{BaladiBook,Keller1998}. In all those settings one ends up with a linear operator $T$ on a Banach space of functions on the state space, and the exponential growth rate of the combinatorial objects is the spectral radius of~$T$.
	
	What we show here is that classical permutation pattern avoidance admits exactly such a formulation. We build a permutation by repeatedly appending a rightmost position and choosing its value by rank (the usual right-to-left reconstruction); to ensure that the new letter does not complete an occurrence of $v$, we keep at each step a pruned family of \emph{locations of $(k{-}1)$-partial occurrences} of $v$, which we call the \emph{frontier}. Each such partial occurrence determines a contiguous interval of ranks for the new letter that would extend it to a full $k$-pattern at the next step, so the frontier implicitly encodes the corresponding forbidden-rank intervals. (Shorter partial occurrences of $v$ play no direct role in the next step: they must first grow to length $k{-}1$ before they can become dangerous, so tracking them would only inflate the state without adding information.) Two mild obstacles appear:
	
	\begin{itemize}
		\item for every active $(k{-}1)$-partial occurrence of $v$, the frontier must remember \emph{where} it sits in the current permutation and which ranks would extend it, so the state is not just the permutation but the permutation \emph{plus} this pruned family of partial occurrences (or, equivalently, the associated forbidden-rank intervals);
		\item the number of legal ranks grows linearly with the length, so a single weight in the length is not enough to make the transfer operator bounded.
	\end{itemize}
	
	We deal with the first point by making the frontier explicit at the level of $(k{-}1)$-partial occurrences, computing from each of them the interval of ranks that would complete it, and discarding those whose completion interval is empty (they can never become dangerous at the very next step). We deal with the second by placing our operator on a doubly weighted $\ell^\infty$ space in which we penalize both the frontier size and \emph{quadratically} the length. With these choices the insertion operator becomes bounded on a doubly weighted $\ell^\infty$ space. This suffices, via a dual Neumann-series argument, to obtain analyticity of the counting series in a disc and hence the Stanley--Wilf finiteness conclusion for classical patterns (Theorem~\ref{thm:MT-operator}). As an additional structural result (which we record but do not use for the finiteness statement itself), a standard core/tail decomposition of Ionescu Tulcea--Marinescu \cite{IonescuTulceaMarinescu1950}, in the streamlined form of Hennion \cite{Hennion1993}, shows that the operator is in fact a finite-rank perturbation of a contraction and therefore quasi-compact.
	
	This reformulation has two concrete advantages.
	
	First, it separates the \emph{combinatorial} task from the \emph{analytic} one. On the combinatorial side we must specify how, under right insertion, we keep track of the ``dangerous'' $(k{-}1)$-partial occurrences of the pattern. For classical patterns this comes down to defining the frontier and proving that it behaves monotonically under legal insertions (Lemma~\ref{lem:frontier-monotone}), in the sense that existing partial occurrences (and their associated forbidden-rank intervals) are transported but not destroyed. Once this is in place, the rest of the argument---choice of weights, duality, and the use of a Neumann series on the Banach dual---is pattern-independent.
	
	Second, the operator viewpoint packages the Stanley--Wilf finiteness statement into a reusable template: grow the permutation by right insertion, choose a complexity measure on decorated states, and apply general transfer-operator machinery to read off exponential growth from spectral data. This makes transparent which ingredients are genuinely pattern-specific (the organization of the frontier) and which are universal (the weighted $\ell^\infty$ framework and the dual Neumann-series argument). It also suggests how one might try to extend the method to certain ``right-visible'' generalizations of classical patterns, where all constraints triggered by adding a new rightmost entry are already visible at the moment of insertion; we return briefly to this perspective in the concluding remarks.
	
	\medskip
	
	Beyond the mere existence of a finite exponential growth rate, one would like to understand the \emph{precise} asymptotic behaviour of $|\Av_n(v)|$, for instance whether it always admits an expansion of the form
	\[
	|\Av_n(v)| \;\sim\; c_v\,n^{\alpha_v}\,\rho_v^n
	\]
	for some constants $c_v>0$, $\alpha_v\in\mathbb{R}$ and $\rho_v>0$. Our framework does not address this question. In special cases, such as the monotone pattern $12\cdots k$, sharp asymptotics of this shape are known by completely different methods (via the representation theory of the symmetric group and the RSK correspondence; see Regev~\cite{Regev1981}). From the transfer-operator viewpoint, such refinements would require a much more delicate analysis of the peripheral spectrum and the singularities of the resolvent for $T_{v,1}$ or its adjoint, and we do not pursue this direction here.
	
	The paper is organized as follows. Section~\ref{sec:perm-specialization} sets up the right-insertion picture for classical pattern avoidance, defines the frontier as a pruned family of $(k{-}1)$-partial occurrences together with their forbidden-rank intervals, and makes its monotonicity under legal insertions fully explicit, including an explicit ``frontier blocks what it should block'' lemma. Section~\ref{sec:penalty-operator} introduces the two-parameter penalty and proves that the insertion operator is bounded even at $z=1$ after shrinking the penalty. Section~\ref{sec:quasicompact} develops a separable predual and a dual transfer-operator point of view from which we read off an analytic counting series; along the way we also record the standard core/tail decomposition and quasi-compactness of the operator, although these structural results are not needed for Theorem~\ref{thm:MT-operator}. We conclude in Section~\ref{sec:concluding} with remarks on how far the ``right-visible'' condition might be pushed beyond classical patterns, and how this operator-theoretic viewpoint sits with the broader theory of permutation classes \cite{VatterSurvey}.

	\section{Permutation pattern avoidance}\label{sec:perm-specialization}
	
	In this section we formalize the right-insertion viewpoint for classical permutations avoiding a fixed pattern. The idea is to view a growing permutation as a state, and the admissible ways of inserting a new value as legal extensions, so that our transfer-operator machinery can be applied directly to the pattern-avoidance problem.
	
	More concretely, we will build permutations one value at a time by inserting a new entry at the right end, with an insertion rule that is in bijection with the Lehmer-code encoding of permutations. Pattern-avoidance will then be enforced by restricting to those insertion choices that keep the forbidden pattern out; the complexity function will be chosen so that it captures the amount of ``forbidden-pattern structure'' already present in the growing permutation. This viewpoint allows us to rewrite the avoidance problem as the study of a suitable transfer operator on a state space of decorated permutations.
	
	\subsection{Right insertion, its formal definition, and the Lehmer code}
	
	Among the many possible ways to grow a permutation (left insertion, insertion into arbitrary positions, inflation operations, \emph{etc.}), right insertion with rank parameters turns out to be particularly convenient: it is compatible with the factorial number system, it produces a simple product structure on the space of insertion codes, and it fits directly into our operator framework via the map $\Phi$ and the local extension sets $A(x)$. We therefore fix a canonical right-insertion scheme and recall its connection to the classical Lehmer code.
	
	Fix a classical pattern $v\in S_k$ with $k\ge 2$. We build a permutation $\pi$ by \emph{right insertion}:
	
	\begin{definition}[Right insertion]\label{def:right-insertion}
		Let $\pi\in S_n$ and let $r\in \{1,\dots,n+1\}$. Define $\mathrm{ins}(\pi,r)\in S_{n+1}$ to be the permutation obtained by
		\begin{enumerate}
			\item appending a new rightmost position at index $n+1$;
			\item giving it value $r$;
			\item and increasing by $1$ every existing value of $\pi$ that is $\ge r$.
		\end{enumerate}
	\end{definition}
	
	It is a standard fact that this right-insertion encoding is \emph{bijective}. We omit the proof and refer the reader interested in learning more about encodings of permutations to, for instance, \cite{BonaBook}.
	
	\begin{lemma}[Right insertion and Lehmer code]\label{lem:lehmer}
		For every $n\ge 0$, the map
		\[
		(r_1,\dots,r_n)\in \prod_{j=1}^n \{1,\dots,j\}
		\longmapsto
		\mathrm{ins}(\cdots \mathrm{ins}(\mathrm{ins}(\varnothing,r_1),r_2)\cdots,r_n)
		\]
		is a bijection from $\prod_{j=1}^n \{1,\dots,j\}$ onto $S_n$. In particular, there are $n!$ such insertion sequences of length $n$, and each $\pi\in S_n$ is produced by \emph{exactly one} of them.
	\end{lemma}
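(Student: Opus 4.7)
The plan is to induct on $n$, with the base case $n=0$ trivial (both sides are singletons) and the inductive step reduced to the single claim that the one-step right-insertion map
\[
\mathrm{ins}\colon S_n \times \{1,\dots,n+1\} \longrightarrow S_{n+1}
\]
is itself a bijection. Granted this, the iterated insertion of length $n$ factors as a composition of $n$ such one-step bijections (one per coordinate $r_j$, with intermediate states in $S_1,\dots,S_n$), which proves the lemma and yields the factorial count $\prod_{j=1}^n j = n!$ as a byproduct.

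For the one-step claim I would exhibit an explicit inverse $\mathrm{uninsert}\colon S_{n+1}\to S_n\times\{1,\dots,n+1\}$ as follows. Given $\pi\in S_{n+1}$, set $r:=\pi(n+1)$ and let $\sigma\in S_n$ be obtained from $\pi$ by deleting the last position and then decreasing by $1$ every remaining value strictly greater than $r$. This $\sigma$ is a genuine permutation of $\{1,\dots,n\}$ because the values of $\pi$ at positions $1,\dots,n$ form exactly $\{1,\dots,n+1\}\setminus\{r\}$, so deleting the entry $r$ and compressing the gap gives a bijection onto $\{1,\dots,n\}$. A direct expansion of the definitions shows that both $\mathrm{uninsert}\circ\mathrm{ins}$ and $\mathrm{ins}\circ\mathrm{uninsert}$ are identities: the shift-up in step (3) of Definition~\ref{def:right-insertion} and the matching shift-down in $\mathrm{uninsert}$ cancel precisely because the gap value $r$ is never among the entries that get renumbered. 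Since the domain and codomain of $\mathrm{ins}$ both have cardinality $(n+1)!$, either one-sided check would in fact suffice; the two-sided verification is written out only for clarity.

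There is no genuine obstacle here; the lemma is a formal consequence of Definition~\ref{def:right-insertion}. The only point requiring attention is the bookkeeping of which entries are shifted by insertion and by how much, so that $\mathrm{uninsert}$ undoes it exactly. This is presumably why the author chose to invoke the fact as standard rather than reproduce the verification.
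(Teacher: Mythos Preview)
Your proof is correct; the induction reduces cleanly to the one-step bijectivity of $\mathrm{ins}\colon S_n\times\{1,\dots,n+1\}\to S_{n+1}$, and your explicit inverse $\mathrm{uninsert}$ (read off $r=\pi(n{+}1)$, delete the last position, shift down values $>r$) does the job. The only thing to note is that the paper itself does not prove this lemma at all: it explicitly omits the proof and refers to standard sources on permutation encodings. So there is no ``paper's approach'' to compare against---you have supplied exactly the routine verification the author chose to suppress, and your closing sentence already anticipates this.
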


	\subsection{Frontiers for classical patterns}\label{subsec:frontier-classical}
	
	To enforce $v$-avoidance during right insertion, we must block exactly those
	ranks whose insertion would \emph{complete} an occurrence of $v$. A key
	simplification is that only \emph{partial occurrences of length $k-1$} can be
	completed to a full $k$-pattern. Shorter prefixes play no role at the next
	step: they may later grow into $(k-1)$-prefixes after further insertions, but
	they cannot become dangerous immediately, and tracking them would artificially
	inflate the frontier without providing any additional safety.
	
	We therefore adopt the following compressed, location-based definition of the frontier.
	
	\begin{definition}[Frontier for a classical pattern]\label{def:frontier-v}
		Let $v\in S_k$ with $k\ge 2$, and let $\pi\in S_n$ avoid~$v$.
		
		A \emph{$(k{-}1)$-partial occurrence} of $v$ in $\pi$ is a choice of indices
		\[
		i_1<\dots<i_{k-1}
		\]
		such that $(\pi_{i_1},\dots,\pi_{i_{k-1}})$ is order-isomorphic
		to the prefix $(v_1,\dots,v_{k-1})$.
		
		For such a $(k{-}1)$-partial occurrence, define its \emph{forbidden rank set}
		\[
		J_\pi(i_1,\dots,i_{k-1}) \subseteq \{1,\dots,n+1\}
		\]
		by
		\[
		J_\pi(i_1,\dots,i_{k-1}) \;:=\; \bigl\{ r \in \{1,\dots,n+1\} :
		\pi' = \mathrm{ins}(\pi,r) \text{ satisfies }
		(\pi'_{i_1},\dots,\pi'_{i_{k-1}},\pi'_{n+1})
		\sim (v_1,\dots,v_k) \bigr\},
		\]
		i.e.\ $J_\pi(i_1,\dots,i_{k-1})$ is the set of ranks $r$ for which inserting $r$ uses the new rightmost position to complete a full copy of $v$ at the next step.
		As we will see below (Lemma~\ref{lem:frontier-intervals}), each such set $J_\pi(i_1,\dots,i_{k-1})$
		is in fact an interval of consecutive ranks. We allow this set to be empty.
		
		The \emph{frontier} of $\pi$ is the set
		\[
		\Fv(\pi)
		\;:=\;
		\bigl\{(i_1,\dots,i_{k-1}) :
		i_1<\dots<i_{k-1} \text{ is a $(k{-}1)$-partial occurrence of $v$ in $\pi$ and }
		J_\pi(i_1,\dots,i_{k-1})\neq\varnothing
		\bigr\}.
		\]
		Thus $\Fv(\pi)$ is a finite family of locations of $(k{-}1)$-partial occurrences; each element of the frontier carries, implicitly, its forbidden rank-interval $J_\pi(i_1,\dots,i_{k-1})$. The union
		\[
		\mathrm{Forb}_v(\pi) \;:=\; \bigcup_{(i_1,\dots,i_{k-1})\in\Fv(\pi)} J_\pi(i_1,\dots,i_{k-1})
		\]
		is the set of \emph{forbidden ranks}, and we write
		\[
		A_v(\pi) \;:=\; \{1,\dots,n+1\}\setminus \mathrm{Forb}_v(\pi)
		\]
		for the set of \emph{frontier-legal ranks}.
	\end{definition}
	
	The following lemma records finiteness and the basic soundness of the
	construction.
	
	\begin{lemma}\label{lem:frontier-finite-sound}
		Let $v\in S_k$ and $\pi\in S_n$ avoid~$v$. Then:
		\begin{enumerate}
			\item $\Fv(\pi)$ is finite;
			\item if $r\notin\mathrm{Forb}_v(\pi)$, then $\mathrm{ins}(\pi,r)\in\Av(v)$.
		\end{enumerate}
	\end{lemma}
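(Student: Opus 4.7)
For part (1) I would simply observe that $\Fv(\pi)$ is a subset of the set of strictly increasing $(k{-}1)$-tuples in $\{1,\dots,n\}$, which has cardinality $\binom{n}{k-1}<\infty$. So finiteness is one line and requires no work.

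The content is in part (2), which I would prove by contrapositive. Assume $\pi'=\mathrm{ins}(\pi,r)$ contains a copy of $v$, witnessed by indices $j_1<\dots<j_k$ in $\{1,\dots,n+1\}$, and split into two cases depending on whether the new rightmost position is used. If $j_k\le n$, then all witnesses sit at old positions; since right insertion shifts values uniformly (all values $\ge r$ are bumped by $1$, lower values are fixed), the relative order of any subset of old entries is preserved. Hence $(\pi_{j_1},\dots,\pi_{j_k})\sim (v_1,\dots,v_k)$ as well, contradicting $\pi\in\Av(v)$. This case is the place where one must invoke the monotonicity of the rank shift, and although the statement is elementary I would spell it out carefully, as it is reused implicitly throughout the paper.

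In the remaining case $j_k=n+1$, set $(i_1,\dots,i_{k-1}):=(j_1,\dots,j_{k-1})$. By the same uniform-shift observation, $(\pi_{i_1},\dots,\pi_{i_{k-1}})$ is order-isomorphic to $(\pi'_{i_1},\dots,\pi'_{i_{k-1}})$, which is order-isomorphic to $(v_1,\dots,v_{k-1})$. So $(i_1,\dots,i_{k-1})$ is a $(k{-}1)$-partial occurrence of $v$ in $\pi$. The definition of $J_\pi(i_1,\dots,i_{k-1})$ is exactly tailored to this situation: the chosen $r$ satisfies $(\pi'_{i_1},\dots,\pi'_{i_{k-1}},\pi'_{n+1})\sim (v_1,\dots,v_k)$, so $r\in J_\pi(i_1,\dots,i_{k-1})$. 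In particular this set is nonempty, so $(i_1,\dots,i_{k-1})\in\Fv(\pi)$ and $r\in\mathrm{Forb}_v(\pi)$, contradicting the hypothesis.

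I do not anticipate any genuine obstacle here; the only point to handle with some care is the rank-shift monotonicity lemma that underlies both cases, and once it is made explicit the argument is essentially a case split on whether the new position $n+1$ participates in the forbidden occurrence. Everything else follows directly from unpacking Definition~\ref{def:frontier-v}.
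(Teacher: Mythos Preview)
Your proposal is correct and matches the paper's own proof essentially line for line: the same $\binom{n}{k-1}$ count for part~(1), and for part~(2) the same contrapositive case split on whether $j_k=n+1$, using that right insertion preserves the relative order of old entries. Your version is slightly more careful in noting explicitly that $J_\pi(i_1,\dots,i_{k-1})\neq\varnothing$ (hence the tuple actually lies in $\Fv(\pi)$), which the paper leaves implicit.
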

	
	\begin{proof}
		(1) There are at most $\binom{n}{k-1}$ choices of $(k{-}1)$-tuples
		$i_1<\dots<i_{k-1}$, hence only finitely many $(k{-}1)$-partial occurrences and therefore only finitely many frontier elements.
		
		(2) By construction, if $r\in\mathrm{Forb}_v(\pi)$ then there is a
		$(k{-}1)$-partial occurrence $(i_1,\dots,i_{k-1})$ whose forbidden rank set $J_\pi(i_1,\dots,i_{k-1})$ contains $r$, and
		inserting $r$ completes a copy of $v$ using the new rightmost position.
		Thus $\mathrm{ins}(\pi,r)\notin\Av(v)$ whenever
		$r\in\mathrm{Forb}_v(\pi)$.
		
		It remains to prove the converse: if $\mathrm{ins}(\pi,r)$ contains a copy
		of $v$, then necessarily $r\in\mathrm{Forb}_v(\pi)$.
		
		Let $\pi'=\mathrm{ins}(\pi,r)$ and suppose $\pi'$ contains $v$. Then there
		exist indices
		\[
		j_1<\dots<j_k
		\]
		such that $(\pi'_{j_1},\dots,\pi'_{j_k})\sim v$.
		If $j_k\neq n+1$, then all $j_\ell\le n$, so the same $k$-tuple of
		positions already existed in $\pi$. Since right insertion preserves the
		relative order among the old entries, we would have
		$(\pi_{j_1},\dots,\pi_{j_k})\sim v$, contradicting $\pi\in\Av(v)$.
		
		Hence $j_k=n+1$ and $j_1<\dots<j_{k-1}\le n$. Again, because the insertion
		$\mathrm{ins}(\pi,r)$ only increases some of the old values but does not
		change their relative order, the subsequence
		$(\pi_{j_1},\dots,\pi_{j_{k-1}})$ is order-isomorphic to the prefix
		$(v_1,\dots,v_{k-1})$. Thus $(j_1,\dots,j_{k-1})$ is a $(k{-}1)$-partial
		occurrence of $v$ in $\pi$.
		
		By Definition~\ref{def:frontier-v}, the forbidden rank set
		\[
		J_\pi(j_1,\dots,j_{k-1})
		\]
		consists exactly of those $r$ for which inserting at rank $r$ completes a copy of $v$ using the new rightmost position. Since
		our chosen $r$ does precisely that, we have
		\[
		r\in J_\pi(j_1,\dots,j_{k-1}) \subseteq\mathrm{Forb}_v(\pi).
		\]
		
		Therefore, if $r\notin\mathrm{Forb}_v(\pi)$, then $\mathrm{ins}(\pi,r)$
		cannot contain an occurrence of $v$, i.e.\ $\mathrm{ins}(\pi,r)\in\Av(v)$.
	\end{proof}
	
	We now justify the claim that each forbidden rank set is an interval of
	consecutive integers.
	
	\begin{lemma}\label{lem:frontier-intervals}
		Let $v\in S_k$ and $\pi\in\Av(v)$, and fix a $(k{-}1)$-partial occurrence
		$i_1<\dots<i_{k-1}$ of $v$ in $\pi$. Let
		\[
		J := J_\pi(i_1,\dots,i_{k-1})
		\]
		be its forbidden rank set as in Definition~\ref{def:frontier-v}. Then $J$ is an (integer) interval: there
		exist integers $1\le a\le b\le n+1$ such that
		\[
		J \;=\; \{a,a+1,\dots,b\}.
		\]
	\end{lemma}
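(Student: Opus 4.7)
The plan is to reduce the claim to a one-parameter observation about ranks. Write $m$ for the rank of $v_k$ inside $(v_1,\dots,v_k)$; equivalently, $m := |\{j\in\{1,\dots,k\}:v_j\le v_k\}|$. The condition that $(\pi'_{i_1},\dots,\pi'_{i_{k-1}},\pi'_{n+1})\sim(v_1,\dots,v_k)$ decomposes into two pieces: the prefix condition that $(\pi'_{i_1},\dots,\pi'_{i_{k-1}})\sim(v_1,\dots,v_{k-1})$, and the rank condition that $\pi'_{n+1}=r$ sits in the $m$-th position when the $k$ values $\pi'_{i_1},\dots,\pi'_{i_{k-1}},r$ are sorted. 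The prefix condition is automatic for every $r$: right insertion shifts some of the old values up by $1$ and leaves the rest alone, so it preserves all relative orders among $\pi_{i_1},\dots,\pi_{i_{k-1}}$, which are assumed to be in the pattern of $(v_1,\dots,v_{k-1})$. So only the rank condition on $r$ restricts membership in $J$.

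Next I would translate the rank condition into an inequality directly about the original values. Let $a_1<a_2<\dots<a_{k-1}$ be the sorted multiset $\{\pi_{i_1},\dots,\pi_{i_{k-1}}\}$. Under $\mathrm{ins}(\pi,r)$, each value $\pi_{i_j}<r$ is unchanged, while each value $\pi_{i_j}\ge r$ becomes $\pi_{i_j}+1$, which is still $>r$. Consequently, for every $r\in\{1,\dots,n+1\}$ and every index $j$,
\[
\pi'_{i_j}<r \ \Longleftrightarrow\ \pi_{i_j}<r,
\]
and in particular $\pi'_{i_j}\ne r$. Therefore the number of entries in $\{\pi'_{i_1},\dots,\pi'_{i_{k-1}}\}$ strictly below $r$ equals $|\{\ell:a_\ell<r\}|$. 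The rank-$m$ condition on $r$ is that this count equals exactly $m-1$.

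Finally, adopting the conventions $a_0:=0$ and $a_k:=n+1$, the inequality $|\{\ell:a_\ell<r\}|=m-1$ is equivalent to $a_{m-1}<r\le a_m$, i.e.\ $r\in\{a_{m-1}+1,a_{m-1}+2,\dots,a_m\}$. Intersecting with $\{1,\dots,n+1\}$ is automatic with these conventions, and one reads off
\[
J \;=\; \{a_{m-1}+1,\,a_{m-1}+2,\,\dots,\,a_m\},
\]
which is a (possibly empty, when $a_{m-1}=a_m$) interval of consecutive integers. Setting $a:=a_{m-1}+1$ and $b:=a_m$ gives the desired form $J=\{a,a+1,\dots,b\}$.

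There is no real obstacle here: the whole argument rests on the single observation that right insertion does not change which of the old values are $<r$, so the rank of $r$ in the new tuple is controlled by a simple count. The only points requiring care are the boundary conventions $a_0=0$, $a_k=n+1$, and the remark that the prefix pattern condition survives insertion for \emph{all} $r$, so that the rank condition truly is the only constraint cut out by $J$.
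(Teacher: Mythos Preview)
Your proof is correct and follows essentially the same line as the paper's: both hinge on the observation that $\pi'_{i_j}<r$ iff $\pi_{i_j}<r$, so membership in $J$ is governed solely by which of the old values $r$ exceeds. The only organizational difference is that the paper records the $k-1$ per-index constraints ($r\le x_\ell$ when $v_k<v_\ell$, $r\ge x_\ell+1$ when $v_k>v_\ell$) and intersects these half-lines, whereas you aggregate them into the single rank condition $|\{\ell:a_\ell<r\}|=m-1$; both routes yield the same endpoints $a=a_{m-1}+1$ and $b=a_m$.
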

	
	\begin{proof}
		Write $x_\ell := \pi_{i_\ell}$ for $\ell=1,\dots,k-1$. For each
		$r\in\{1,\dots,n+1\}$, the right insertion $\pi'=\mathrm{ins}(\pi,r)$
		leaves the relative order among the old entries $(x_1,\dots,x_{k-1})$
		unchanged, and replaces the new point by the value $\pi'_{n+1}=r$.
		Thus the pattern of
		\[
		(\pi'_{i_1},\dots,\pi'_{i_{k-1}},\pi'_{n+1})
		\;=\;
		(\pi'_{i_1},\dots,\pi'_{i_{k-1}},r)
		\]
		is determined entirely by the relative order of $r$ with each $x_\ell$.
		
		Fix $\ell\in\{1,\dots,k-1\}$, and consider the comparison between the new
		point and $x_\ell$ in $\pi'$. After insertion we have
		\[
		\pi'_{i_\ell} \;=\;
		\begin{cases}
			x_\ell+1, & \text{if } x_\ell \ge r,\\
			x_\ell,   & \text{if } x_\ell < r.
		\end{cases}
		\]
		A direct check shows:
		\begin{itemize}
			\item $r < \pi'_{i_\ell}$ (new point below $x_\ell$) if and only if
			$r \le x_\ell$;
			\item $r > \pi'_{i_\ell}$ (new point above $x_\ell$) if and only if
			$r \ge x_\ell+1$.
		\end{itemize}
		Thus, for each $\ell$, the relation ``new below $x_\ell$'' or
		``new above $x_\ell$'' imposes a \emph{single-sided} constraint on $r$ of
		the form $r\le x_\ell$ or $r\ge x_\ell+1$.
		
		Now compare with the pattern $v$. For each $\ell$ we require that the order
		relation between $v_k$ and $v_\ell$ matches that between $r$ and
		$\pi'_{i_\ell}$:
		\begin{itemize}
			\item If $v_k < v_\ell$, then we need $r < \pi'_{i_\ell}$, i.e.\
			the constraint $r \le x_\ell$.
			\item If $v_k > v_\ell$, then we need $r > \pi'_{i_\ell}$, i.e.\
			the constraint $r \ge x_\ell+1$.
		\end{itemize}
		Therefore $r\in J$ if and only if it satisfies simultaneously all such
		constraints, one for each $\ell$. Equivalently,
		\[
		J \;=\;
		\bigl\{
		r\in\{1,\dots,n+1\} :
		r\le x_\ell \text{ whenever } v_k < v_\ell,\;
		r\ge x_\ell+1 \text{ whenever } v_k > v_\ell
		\bigr\}.
		\]
		
		Let
		\[
		a \;:=\;
		\max_{\{\ell :\, v_k > v_\ell\}} (x_\ell+1),
		\qquad
		b \;:=\;
		\min_{\{\ell :\, v_k < v_\ell\}} x_\ell,
		\]
		with the usual conventions that the maximum over an empty set is $1$ and
		the minimum over an empty set is $n+1$. Then the above description shows
		that
		\[
		J \;=\; \{r\in\{1,\dots,n+1\} : a \le r \le b\}.
		\]
		If $a>b$, this interval is empty (and then the partial occurrence contributes
		nothing to the frontier); otherwise it is precisely the integer interval
		$\{a,a+1,\dots,b\}$. This proves the claim.
	\end{proof}

	\subsection{State space and closure under legal insertions}
	
	We now package the permutation and its frontier into a single object. This will be the concrete state space on which our transfer operator will act.
	
	\begin{definition}[State space for $\Av(v)$]\label{def:state-space-v}
		Let $v\in S_k$ be fixed. The \emph{state space} $\X_v$ consists of all pairs
		\[
		x \;=\; (\pi,\Fv(\pi)),
		\]
		where $\pi$ is a $v$-avoiding permutation and $\Fv(\pi)$ is the frontier of $\pi$ in the sense of Definition~\ref{def:frontier-v}.
		
		For such a state $x=(\pi,\Fv(\pi))$ we define:
		\begin{itemize}
			\item the \emph{frontier size}
			\[
			m(x) \;:=\; |\Fv(\pi)|,
			\]
			\item the \emph{length}
			\[
			s(x) \;:=\; |\pi|.
			\]
		\end{itemize}
		The admissible right-insertions from $x$ are the ranks that are not blocked by the frontier:
		\[
		A_v(x) \;:=\; \{1,\dots,s(x)+1\} \setminus \mathrm{Forb}_v(\pi),
		\]
		where $\mathrm{Forb}_v(\pi)$ is the union of the forbidden rank-intervals $J_\pi(i_1,\dots,i_{k-1})$ over $(i_1,\dots,i_{k-1})\in\Fv(\pi)$.
		For $r\in A_v(x)$ we write
		\[
		\Phi_v(x,r)
		\]
		for the new state obtained by inserting $r$ on the right of $\pi$ (using Definition~\ref{def:right-insertion}) and recomputing the frontier of the resulting permutation.
	\end{definition}
	
	Thus, informally, a state remembers both ``where we are'' in the class $\Av(v)$ (the permutation $\pi$ itself) and the current collection of dangerous $(k{-}1)$-partial occurrences (its frontier). The set $A_v(x)$ plays the role of the extension set.
	
	\medskip
	
	The key structural property we \emph{do} use is that legal insertions stay inside the state space; this is just a restatement of Lemma~\ref{lem:frontier-finite-sound}(2) in the present notation.
	
	\begin{lemma}\label{lem:legal-preserves-Av}
		Let $v\in S_k$ and let $x=(\pi,\Fv(\pi))\in \X_v$. For every $r\in A_v(x)$ the permutation
		\[
		\pi' := \mathrm{ins}(\pi,r)
		\]
		avoids $v$, and the recomputed frontier $\Fv(\pi')$ makes $(\pi',\Fv(\pi'))$ an element of $\X_v$.
	\end{lemma}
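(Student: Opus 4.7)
The plan is to observe that this statement is essentially a reformulation of Lemma~\ref{lem:frontier-finite-sound} in the state-space language of Definition~\ref{def:state-space-v}, so the proof reduces to unfolding definitions and citing the already-established soundness property of the frontier. There is no genuine combinatorial content left to prove; the only task is to verify that every clause in ``$(\pi',\Fv(\pi'))\in\X_v$'' follows directly from the earlier lemmas.

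First I would translate the hypothesis $r\in A_v(x)$ back to its definition: by Definition~\ref{def:state-space-v}, this exactly says that $r\in\{1,\dots,s(x)+1\}=\{1,\dots,|\pi|+1\}$ and $r\notin\mathrm{Forb}_v(\pi)$. So $r$ is a legal rank for right insertion into $\pi$, and it avoids the union of forbidden intervals attached to the $(k{-}1)$-partial occurrences in $\Fv(\pi)$. Next I would invoke Lemma~\ref{lem:frontier-finite-sound}(2) verbatim: since $r\notin\mathrm{Forb}_v(\pi)$, the permutation $\pi'=\mathrm{ins}(\pi,r)$ lies in $\Av(v)$. This gives the first assertion of the lemma.

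For the second assertion, I would apply Definition~\ref{def:frontier-v} to the permutation $\pi'$, which is legitimate precisely because $\pi'\in\Av(v)$. This defines $\Fv(\pi')$ as the (finite) collection of $(k{-}1)$-partial occurrences of $v$ in $\pi'$ whose forbidden rank set $J_{\pi'}(\cdot)$ is nonempty. Finiteness of $\Fv(\pi')$ is then Lemma~\ref{lem:frontier-finite-sound}(1) applied at length $n+1$. The pair $(\pi',\Fv(\pi'))$ therefore satisfies the defining conditions of $\X_v$ in Definition~\ref{def:state-space-v}, namely that its first coordinate is a $v$-avoiding permutation and its second coordinate is the frontier of that permutation.

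The only step that is not pure bookkeeping is the appeal to Lemma~\ref{lem:frontier-finite-sound}(2), and that work has already been done; in particular, no structural claim about how $\Fv(\pi)$ is transported into $\Fv(\pi')$ is required here (that will be the subject of the subsequent monotonicity lemma referenced as Lemma~\ref{lem:frontier-monotone}). Consequently I do not anticipate any obstacle: the proof will be two or three sentences, essentially a pointer back to the soundness lemma with the state-space notation substituted in.
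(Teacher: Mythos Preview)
Your proposal is correct and matches the paper's own proof essentially verbatim: unfold $r\in A_v(x)$ to $r\notin\mathrm{Forb}_v(\pi)$, cite Lemma~\ref{lem:frontier-finite-sound}(2) for $\pi'\in\Av(v)$, and Lemma~\ref{lem:frontier-finite-sound}(1) for finiteness of $\Fv(\pi')$. There is nothing to add or change.
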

	
	\begin{proof}
		By definition of $A_v(x)$ we have
		\[
		r \notin \mathrm{Forb}_v(\pi).
		\]
		Lemma~\ref{lem:frontier-finite-sound}(2) says precisely that for such an $r$ the right insertion $\pi'=\mathrm{ins}(\pi,r)$ still avoids $v$. By Lemma~\ref{lem:frontier-finite-sound}(1), the newly computed frontier $\Fv(\pi')$ is finite, so $(\pi',\Fv(\pi'))$ again belongs to $\X_v$.
	\end{proof}
	
	\begin{remark}
		In principle one could allow states from which no legal extension is possible, i.e.\ states $x$ with $A_v(x)=\varnothing$. In the concrete classical-avoidance setting, we can prove that this does not happen: every $v$-avoiding state admits at least one legal right insertion. However, our analytic arguments will only use Lemma~\ref{lem:legal-preserves-Av}, i.e.\ that \emph{if} we take a legal rank, we stay inside $\Av(v)$.
	\end{remark}
	
	Finally, observe that the number of available ranks is always bounded by the length:
	\[
	|A_v(x)| \;\le\; s(x)+1,
	\]
	simply because there are at most $s(x)+1$ positions in which a new value can be inserted on the right.
	
	In the classical pattern setting we will also need a simple but useful structural property of the frontier: under a \emph{legal} right insertion, the existing $(k{-}1)$-partial occurrences in the frontier never disappear, and their forbidden rank-intervals are transported by the rank-bumping map. Informally, when we insert a new value of rank $r$, every old value $<r$ stays put and every old value $\ge r$ is bumped up by $1$, so the relative order among all old entries is preserved and the interval of dangerous ranks attached to each frontier element is simply shifted. The next lemma makes this precise.
	
	\begin{lemma}[Monotonicity of the frontier under legal insertions]\label{lem:frontier-monotone}
		Let $v\in S_k$ with $k\ge 2$, and let $x=(\pi,\Fv(\pi))\in\X_v$ with $\pi\in S_n$. For $r\in A_v(x)$ define the \emph{rank-bumping map}
		\[
		\varphi_r : \{1,\dots,n+1\}\longrightarrow \{1,\dots,n+2\}\setminus\{r\},\qquad
		\varphi_r(q) \;:=\;
		\begin{cases}
			q, & q<r,\\
			q+1, & q\ge r.
		\end{cases}
		\]
		Let $\pi' := \mathrm{ins}(\pi,r)$ and $x':=(\pi',\Fv(\pi'))$. Then for every $(k{-}1)$-tuple
		\[
		p=(i_1,\dots,i_{k-1})\in\Fv(\pi)
		\]
		we have:
		\begin{enumerate}
			\item $p$ is still a $(k{-}1)$-partial occurrence of $v$ in $\pi'$;
			\item its new forbidden rank set is given by
			\[
			J_{\pi'}(p) \;=\; \varphi_r\bigl(J_\pi(p)\bigr);
			\]
			in particular $J_{\pi'}(p)\neq\varnothing$ whenever $J_\pi(p)\neq\varnothing$.
		\end{enumerate}
		Consequently $\Fv(\pi)\subseteq \Fv(\pi')$ and
		\[
		m(x') = |\Fv(\pi')| \;\ge\; |\Fv(\pi)| = m(x).
		\]
	\end{lemma}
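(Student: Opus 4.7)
The plan is to verify the two numbered assertions in turn; the inclusion $\Fv(\pi)\subseteq\Fv(\pi')$ and the size inequality will then fall out immediately. For part (1), I would appeal to the defining property of right insertion (Definition~\ref{def:right-insertion}): $\mathrm{ins}(\pi,r)$ does not alter the relative order of any pair of old entries of $\pi$. Since being order-isomorphic to $(v_1,\dots,v_{k-1})$ depends only on the pairwise comparisons among $\pi_{i_1},\dots,\pi_{i_{k-1}}$, the same tuple $(i_1,\dots,i_{k-1})$ continues to witness a $(k-1)$-partial occurrence of $v$ in $\pi'$.

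For part (2), my approach is to compare the explicit single-sided constraint descriptions of $J_\pi(p)$ and $J_{\pi'}(p)$ coming from the proof of Lemma~\ref{lem:frontier-intervals}. Writing $x_\ell:=\pi_{i_\ell}$, the value at position $i_\ell$ in $\pi'$ is precisely $\varphi_r(x_\ell)$, so that lemma yields
\begin{align*}
J_\pi(p) &= \bigl\{s\in\{1,\dots,n+1\} : s\le x_\ell\text{ for }v_k<v_\ell,\ s\ge x_\ell+1\text{ for }v_k>v_\ell\bigr\},\\
J_{\pi'}(p) &= \bigl\{s'\in\{1,\dots,n+2\} : s'\le \varphi_r(x_\ell)\text{ for }v_k<v_\ell,\ s'\ge \varphi_r(x_\ell)+1\text{ for }v_k>v_\ell\bigr\}.
\end{align*}
Since $\varphi_r$ is an order-preserving bijection of $\{1,\dots,n+1\}$ onto $\{1,\dots,n+2\}\setminus\{r\}$ and everything in sight is an integer, one checks $s\le x_\ell\iff\varphi_r(s)\le\varphi_r(x_\ell)$ and $s\ge x_\ell+1\iff\varphi_r(s)\ge\varphi_r(x_\ell)+1$, which gives the forward inclusion $\varphi_r(J_\pi(p))\subseteq J_{\pi'}(p)$.

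The one step I anticipate requiring care is the reverse inclusion, since a candidate $s'\in J_{\pi'}(p)$ lifts back through $\varphi_r^{-1}$ only when $s'\ne r$. I plan to handle this by a direct case split on whether $x_\ell<r$ or $x_\ell\ge r$: substituting $s'=r$ into the $J_{\pi'}(p)$-constraints reduces them to exactly the $J_\pi(p)$-constraints evaluated at $s=r$, so $r\in J_{\pi'}(p)$ iff $r\in J_\pi(p)$. The hypothesis $r\in A_v(x)$ excludes $r\in J_\pi(p)$ for every frontier element, so $s'\ne r$ is automatic, and the rest of the equality $J_{\pi'}(p)=\varphi_r(J_\pi(p))$ follows by the same integer monotonicity as above. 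Injectivity of $\varphi_r$ then shows that $J_\pi(p)\neq\varnothing$ forces $J_{\pi'}(p)\neq\varnothing$, so $p$ is preserved in $\Fv(\pi')$; this immediately delivers the two final consequences of the lemma.
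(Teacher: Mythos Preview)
Your proposal is correct and, on part~(2), arguably more careful than the paper's own argument. For part~(1) you and the paper do the same thing. For part~(2) the paper takes a more abstract route: rather than unpacking the single-sided constraints from Lemma~\ref{lem:frontier-intervals}, it observes directly that for any $q\in\{1,\dots,n+1\}$ the $k$-tuples $(\pi_{i_1},\dots,\pi_{i_{k-1}},q)$ and $(\varphi_r(\pi_{i_1}),\dots,\varphi_r(\pi_{i_{k-1}}),\varphi_r(q))$ are order-isomorphic (since $\varphi_r$ is strictly increasing), and hence $q\in J_\pi(p)\iff \varphi_r(q)\in J_{\pi'}(p)$. This is slicker, but as written it only pins down $J_{\pi'}(p)\cap\bigl(\{1,\dots,n+2\}\setminus\{r\}\bigr)$; the paper does not explicitly rule out $r\in J_{\pi'}(p)$. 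Your case split on $x_\ell<r$ versus $x_\ell\ge r$ does exactly that, and crucially it is the one place where the hypothesis $r\in A_v(x)$ (hence $r\notin J_\pi(p)$) is actually used. So your approach trades a short order-isomorphism argument for an explicit constraint comparison, with the payoff that the reverse inclusion---and the role of legality of $r$---are made fully visible.
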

	
	\begin{proof}
		Fix $p=(i_1,\dots,i_{k-1})\in\Fv(\pi)$ and write $x_\ell:=\pi_{i_\ell}$ for $\ell=1,\dots,k-1$. Set $\pi':=\mathrm{ins}(\pi,r)$.
		
		\medskip\noindent
		By definition of right insertion we have
		\[
		\pi'_{i_\ell} = \varphi_r(\pi_{i_\ell}) = \varphi_r(x_\ell)
		\qquad (1\le \ell\le k-1),
		\]
		and $\varphi_r$ is strictly increasing as a function of the rank. Thus
		\[
		(\pi'_{i_1},\dots,\pi'_{i_{k-1}})
		= (\varphi_r(x_1),\dots,\varphi_r(x_{k-1}))
		\]
		is order-isomorphic to $(x_1,\dots,x_{k-1})$ and hence to $(v_1,\dots,v_{k-1})$. Therefore $p$ remains a $(k{-}1)$-partial occurrence of $v$ in~$\pi'$.
		
		\medskip\noindent
		Let $q\in\{1,\dots,n+1\}$ and set $s:=\varphi_r(q)\in\{1,\dots,n+2\}$. There are two relevant permutations:
		\[
		\tau := \mathrm{ins}(\pi,q)\in S_{n+1}, \qquad
		\tau' := \mathrm{ins}(\pi',s)\in S_{n+2}.
		\]
		By construction,
		\[
		\tau_{n+1} = q,\qquad
		\tau'_{n+2} = s,
		\]
		and for $j\le n$,
		\[
		\tau_j =
		\begin{cases}
			\pi_j, & \pi_j < q,\\
			\pi_j + 1, & \pi_j \ge q,
		\end{cases}
		\qquad
		\tau'_j =
		\begin{cases}
			\pi'_j, & \pi'_j < s,\\
			\pi'_j + 1, & \pi'_j \ge s.
		\end{cases}
		\]
		On the level of \emph{ranks} we can write the two $k$-tuples we care about as
		\[
		(\pi_{i_1},\dots,\pi_{i_{k-1}},q)
		\quad\text{and}\quad
		(\pi'_{i_1},\dots,\pi'_{i_{k-1}},s)
		= (\varphi_r(\pi_{i_1}),\dots,\varphi_r(\pi_{i_{k-1}}),\varphi_r(q)).
		\]
		Since $\varphi_r$ is strictly increasing, these two $k$-tuples are order-isomorphic. It follows that
		\[
		(\tau_{i_1},\dots,\tau_{i_{k-1}},\tau_{n+1})\sim (v_1,\dots,v_k)
		\quad\Longleftrightarrow\quad
		(\tau'_{i_1},\dots,\tau'_{i_{k-1}},\tau'_{n+2})\sim (v_1,\dots,v_k).
		\]
		In other words,
		\[
		q\in J_\pi(p)
		\quad\Longleftrightarrow\quad
		s=\varphi_r(q)\in J_{\pi'}(p).
		\]
		Thus
		\[
		J_{\pi'}(p)
		= \{\varphi_r(q): q\in J_\pi(p)\}
		= \varphi_r\bigl(J_\pi(p)\bigr),
		\]
		proving (2). Since $\varphi_r$ is injective, $J_\pi(p)\neq\varnothing$ implies $J_{\pi'}(p)\neq\varnothing$.
		
		Because $p\in\Fv(\pi)$ means exactly that $J_\pi(p)\neq\varnothing$, we deduce that $p$ also belongs to $\Fv(\pi')$. Thus $\Fv(\pi)\subseteq\Fv(\pi')$, and therefore
		\[
		m(x') = |\Fv(\pi')| \;\ge\; |\Fv(\pi)| = m(x).
		\]
	\end{proof}

	\section{A two-parameter penalty and boundedness}\label{sec:penalty-operator}
	
	We now face the only genuinely nontrivial analytic obstacle. In our concrete permutation setting we obtain
	\[
	|A_v(x)| \;\le\; s(x)+1,
	\]
	so the number of admissible insertions grows at most linearly with the length. A single penalty in the frontier size \(m(x)\) cannot offset this linear growth uniformly in \(s(x)\), so we also penalize large lengths, using a quadratic weight in \(s(x)\).

	\begin{definition}[Two-parameter Banach space]\label{def:two-parameter}
		Let $0<\theta<1$ and $0<\kappa<1$. Define
		\[
		\mathcal{B}_{\theta,\kappa}
		:= \left\{ f:\X_v \to \mathbb{C} :
		\|f\|_{\theta,\kappa}
		:= \sup_{x\in \X_v} \theta^{-m(x)} \kappa^{-s(x)^2} |f(x)| < \infty \right\}.
		\]
	\end{definition}
	
	Thus $\mathcal{B}_{\theta,\kappa}$ is a weighted $\ell^\infty$ space on the state space $\X_v$, where the value of $f$ at a state $x$ is discounted by a factor
	\[
	\theta^{m(x)} \kappa^{s(x)^2}.
	\]
	The parameter $\theta$ penalizes large frontiers, while $\kappa$ penalizes large lengths, and the quadratic exponent $s(x)^2$ is chosen so that it dominates the linear growth in $|A_v(x)|$.
	
	The transfer operator is defined by the same formula as before, now restricted to the concrete state space:
	\[
	(T_{v,z} f)(x)
	:= \sum_{r\in A_v(x)} z \, f(\Phi_v(x,r)),
	\]
	where $A_v(x)$ and $\Phi_v(x,r)$ are as in Definition~\ref{def:state-space-v}.
	
	\begin{lemma}[Boundedness of $T_{v,z}$]\label{lem:Tvz-bounded}
		Let $v$ be fixed and $0<\theta<1$. Then there exist $0<\kappa<1$ and $r_0>0$ such that, for all $|z|\le r_0$, the operator $T_{v,z}$ is bounded on $\mathcal{B}_{\theta,\kappa}$. Moreover, by shrinking $\kappa$ if necessary we may assume $r_0\ge 1$, so in particular $T_{v,1}$ is bounded on $\mathcal{B}_{\theta,\kappa}$.
	\end{lemma}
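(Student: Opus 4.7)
The plan is a direct pointwise estimate. For each state $x\in\X_v$ I will bound
\[
\theta^{-m(x)}\kappa^{-s(x)^2}\,\bigl|(T_{v,z}f)(x)\bigr|
\]
by a universal constant times $|z|\,\|f\|_{\theta,\kappa}$, and then take the supremum. The two ingredients that have to combine cleanly are the frontier-monotonicity lemma (for the $\theta$-weight) and the quadratic length penalty (for the $\kappa$-weight), which between them have to absorb the linear growth $|A_v(x)|\le s(x)+1$.

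First I would fix $x\in\X_v$ and $r\in A_v(x)$, and set $x':=\Phi_v(x,r)$. Two structural facts apply: $s(x')=s(x)+1$ by Definition~\ref{def:right-insertion}, and $m(x')\ge m(x)$ by Lemma~\ref{lem:frontier-monotone}. Because $\theta,\kappa\in(0,1)$, the first gives $\theta^{m(x')}\le\theta^{m(x)}$, while the identity $s(x')^2=s(x)^2+2s(x)+1$ gives $\kappa^{s(x')^2}=\kappa^{s(x)^2}\kappa^{2s(x)+1}$. Plugging $|f(x')|\le\|f\|_{\theta,\kappa}\,\theta^{m(x')}\kappa^{s(x')^2}$ into the definition of $T_{v,z}$, summing over the at most $s(x)+1$ legal ranks, and dividing through by $\theta^{m(x)}\kappa^{s(x)^2}$ yields
\[
\theta^{-m(x)}\kappa^{-s(x)^2}\,\bigl|(T_{v,z}f)(x)\bigr|
\;\le\;|z|\,\|f\|_{\theta,\kappa}\,(s(x)+1)\,\kappa^{2s(x)+1}.
\]

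Next I would observe that for every $\kappa\in(0,1)$ the map $n\mapsto(n+1)\kappa^{2n+1}$ tends to zero as $n\to\infty$, hence is bounded by some $M(\kappa)<\infty$. Taking the supremum over $x$ gives $\|T_{v,z}\|_{\theta,\kappa}\le|z|\,M(\kappa)$, so $T_{v,z}$ is bounded for every $z\in\mathbb{C}$; in particular any $r_0>0$ works, and $T_{v,1}$ is bounded with norm at most $M(\kappa)$. Moreover $M(\kappa)\to 0$ as $\kappa\to 0^+$ (for very small $\kappa$ the supremum is attained at $n=0$ and equals $\kappa$), so by shrinking $\kappa$ one may additionally arrange $M(\kappa)\le 1$, anticipating the contraction estimate used in the next section for the Neumann-series argument.

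The step that could go wrong is the control of the $\theta$-weight. Without the inequality $m(x')\ge m(x)$, a legal insertion could in principle collapse the frontier, and $\theta^{m(x')-m(x)}$ could be as large as $\theta^{-m(x)}$, making the pointwise bound useless. It is precisely Lemma~\ref{lem:frontier-monotone} that rules this out by transporting each old $(k{-}1)$-partial occurrence (together with its nonempty forbidden interval) through the rank-bumping map $\varphi_r$. The other sharp point is the exponent~$2$ in the length penalty: a merely linear penalty $\kappa^{s(x)}$ would leave an unbounded factor $(s(x)+1)\kappa$ on the right-hand side, so the quadratic choice built into $\mathcal{B}_{\theta,\kappa}$ is not cosmetic but forced by the linear growth of $|A_v(x)|$.
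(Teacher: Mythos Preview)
Your proof is correct and follows essentially the same route as the paper: use $m(\Phi_v(x,r))\ge m(x)$ from Lemma~\ref{lem:frontier-monotone} and $s(\Phi_v(x,r))=s(x)+1$ to obtain the pointwise bound $\theta^{-m(x)}\kappa^{-s(x)^2}|(T_{v,z}f)(x)|\le |z|\,(s(x)+1)\,\kappa^{2s(x)+1}\,\|f\|_{\theta,\kappa}$, then set $M(\kappa)=\sup_{s\ge 0}(s+1)\kappa^{2s+1}$ and shrink $\kappa$ so that $M(\kappa)\le 1$. Your remark that $T_{v,z}$ is in fact bounded for \emph{every} $z\in\mathbb{C}$ (so any $r_0$ works) is a harmless sharpening of the paper's formulation, which instead packages the choice $r_0:=1/M(\kappa)\ge 1$ so as to get $\|T_{v,z}\|_{\theta,\kappa}\le 1$ on the disc $|z|\le r_0$.
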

	
	\begin{proof}
		Let $f\in \mathcal{B}_{\theta,\kappa}$ and $x\in \X_v$ be arbitrary. Write
		\[
		m := m(x), \qquad s := s(x),
		\]
		for the frontier size and the length at $x$.
		
		\medskip\noindent
		For any legal rank $r\in A_v(x)$ the new state is $\Phi_v(x,r)=(\pi',\Fv(\pi'))$. By Lemma~\ref{lem:frontier-monotone} the frontier size is monotone:
		\[
		m\bigl(\Phi_v(x,r)\bigr) \ge m,
		\]
		and by construction the length increases by exactly one,
		\[
		s\bigl(\Phi_v(x,r)\bigr) = s+1.
		\]
		Using the definition of the norm on $\mathcal{B}_{\theta,\kappa}$, this implies
		\[
		|f(\Phi_v(x,r))|
		\;\le\;
		\|f\|_{\theta,\kappa}\,
		\theta^{m(\Phi_v(x,r))}\,
		\kappa^{s(\Phi_v(x,r))^2}
		\;\le\;
		\|f\|_{\theta,\kappa}\,
		\theta^{m}\,
		\kappa^{(s+1)^2}.
		\]
		
		\medskip\noindent
		From a state $x$ of length $s$ there are at most $s+1$ legal ranks (indeed $|A_v(x)|\le s+1$). Summing over $r\in A_v(x)$ gives
		\[
		|(T_{v,z} f)(x)|
		\;=\;
		\Bigl|\sum_{r\in A_v(x)} z\,f(\Phi_v(x,r))\Bigr|
		\;\le\;
		|z| \sum_{r\in A_v(x)} |f(\Phi_v(x,r))|
		\;\le\;
		|z| (s+1) \|f\|_{\theta,\kappa}\,
		\theta^{m}\,
		\kappa^{(s+1)^2}.
		\]
		
		\medskip\noindent
		Divide both sides by $\theta^m \kappa^{s^2}$ to compare with the norm:
		\[
		\theta^{-m} \kappa^{-s^2} |(T_{v,z} f)(x)|
		\;\le\;
		|z| (s+1)\, \|f\|_{\theta,\kappa}\, \kappa^{(s+1)^2 - s^2}
		\;=\;
		|z| (s+1)\, \|f\|_{\theta,\kappa}\, \kappa^{2s+1}.
		\]
		The right-hand side depends on $f$ only through its norm and depends on $x$ only through $s=s(x)$.

		\medskip\noindent
		Fix any $\kappa\in (0,1)$. For $s\to\infty$ we have $\kappa^{2s}\to 0$ exponentially fast, while $(s+1)$ grows only linearly. Thus
		\[
		M(\kappa) := \sup_{s\ge 0} (s+1)\kappa^{2s+1} < \infty
		\quad\text{and}\quad
		M(\kappa)\xrightarrow[\kappa\to 0]{} 0.
		\]
		Hence we can choose $\kappa$ sufficiently small that
		\[
		M(\kappa) \le 1.
		\]
		For such a choice, the previous estimate yields, for all $x\in\X_v$,
		\[
		\theta^{-m(x)} \kappa^{-s(x)^2} |(T_{v,1} f)(x)|
		\;\le\;
		M(\kappa)\,\|f\|_{\theta,\kappa}
		\;\le\;
		\|f\|_{\theta,\kappa}.
		\]
		Taking the supremum over $x$ shows that
		\[
		\|T_{v,1} f\|_{\theta,\kappa} \le \|f\|_{\theta,\kappa},
		\]
		so $T_{v,1}$ is bounded on $\mathcal{B}_{\theta,\kappa}$.
		
		\medskip\noindent
		For general $z$ we merely track the factor $|z|$:
		\[
		\theta^{-m(x)} \kappa^{-s(x)^2} |(T_{v,z} f)(x)|
		\;\le\;
		|z|\, M(\kappa)\,\|f\|_{\theta,\kappa}.
		\]
		Taking the supremum over $x$ gives
		\[
		\|T_{v,z} f\|_{\theta,\kappa}
		\;\le\;
		|z|\, M(\kappa)\,\|f\|_{\theta,\kappa}.
		\]
		Define
		\[
		r_0 := \frac{1}{ M(\kappa)}.
		\]
		Our choice of $\kappa$ above ensures $M(\kappa)\le 1$, hence $r_0\ge 1$. For every $|z|\le r_0$ we then have
		\[
		\|T_{v,z} f\|_{\theta,\kappa} \le \|f\|_{\theta,\kappa},
		\]
		so $T_{v,z}$ is bounded on $\mathcal{B}_{\theta,\kappa}$ for all such $z$, and in particular for $z=1$.
	\end{proof}
	
	This is the central analytic estimate. The quadratic dependence on $s(x)$ in the weight was chosen so that the factor $(s+1)\kappa^{2s+1}$ remains uniformly bounded (and can be made arbitrarily small by shrinking $\kappa$), thereby compensating for the linear growth of the number of legal ranks $|A_v(x)|$.

	\section{Quasi-compactness and the counting series}\label{sec:quasicompact}
	
	We now discuss, in this operator language, how one can recover an analytic description of the growth series in a disc and a quasi-compactness statement for the transfer operator. The structure of the argument is the standard one for transfer operators: we split the space into a finite part where everything is complicated but finite-dimensional, and a tail where the operator is a contraction; we then pass to a separable predual where the natural Dirac mass lives, and invoke a Neumann series for the dual operator. In the classical pattern-avoidance setting, the actual \emph{finiteness} of the exponential growth rate is provided by the Marcus--Tardos bound; the operator framework gives a clean representation of the counting series and a quasi-compactness statement for the transfer operator.
	
	\subsection{Finite core and tail}
	
	Fix an integer $C\ge 0$. Partition the state space $\X_v$ according to the length:
	\[
	\X_v^{\le C} := \{ x\in \X_v : s(x) \le C \}, \qquad
	\X_v^{> C} := \X_v \setminus \X_v^{\le C}.
	\]
	There are only finitely many permutations of length $\le C$, and for each such permutation there are only finitely many possible frontiers (a frontier is obtained by selecting $(k{-}1)$-partial occurrences with nonempty forbidden intervals among finitely many candidates). Hence $\X_v^{\le C}$ is a finite set.
	
	Define the projection $P_C : \mathcal{B}_{\theta,\kappa} \to \mathcal{B}_{\theta,\kappa}$ by
	\[
	(P_C f)(x) :=
	\begin{cases}
		f(x), & x\in \X_v^{\le C}, \\
		0, & x\in \X_v^{> C}.
	\end{cases}
	\]
	Define also the tail projection $P_C^{>}: \mathcal{B}_{\theta,\kappa} \to \mathcal{B}_{\theta,\kappa}$ by
	\[
	(P_C^{>} f)(x) :=
	\begin{cases}
		0, & x\in \X_v^{\le C},\\
		f(x), & x\in \X_v^{>C}.
	\end{cases}
	\]
	Thus $P_C^{>}=I-P_C$, and we refer to $P_C$ and $P_C^{>}$ as the core and tail projections.
	
	Thus $P_C f$ is supported on a finite set, and $P_C$ is the projection onto the finitely supported functions living on states of length at most $C$.
	
	With this notation we decompose the transfer operator as
	\[
	T_{v,z} = T_{v,z} P_C + T_{v,z} P_C^{>}.
	\]
	The first summand has finite rank (its range is contained in the finite-dimensional space of functions supported on $\X_v^{\le C}$), while the second summand is the ``tail'' part, which only sees states of large length.
	
	\subsection{Tail contraction}
	
	The next lemma says that, if $C$ is large enough, then the tail part is a strict contraction in the norm $\|\cdot\|_{\theta,\kappa}$.
	
	\begin{lemma}[Tail contraction]\label{lem:tail}
		Let $T_{v,z}$ be as in Lemma~\ref{lem:Tvz-bounded} (in particular $T_{v,1}$ is bounded). Then there exists $C$ such that the tail--tail block
		\[
		L_C := P_C^{>} T_{v,1} P_C^{>}
		\]
		satisfies
		\[
		\|L_C\|_{\theta,\kappa} < 1.
		\]
		The same is true for all $|z|\le 1$, with $L_C$ replaced by $P_C^{>} T_{v,z} P_C^{>}$.
	\end{lemma}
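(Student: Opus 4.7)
The plan is to squeeze the proof out of the pointwise estimate that was already established inside the proof of Lemma~\ref{lem:Tvz-bounded}, namely
\[
\theta^{-m(x)}\kappa^{-s(x)^2}\,|(T_{v,z}f)(x)|
\;\le\;
|z|\,(s(x)+1)\,\kappa^{2s(x)+1}\,\|f\|_{\theta,\kappa}.
\]
There, taking the supremum over \emph{all} $s\ge 0$ forced us to settle for the constant $M(\kappa)$. For the tail-tail block we are allowed to take the supremum only over $s>C$, and on that range $(s+1)\kappa^{2s+1}$ decays exponentially to $0$. So the whole content of the lemma is: choose $C$ large enough that this restricted supremum is strictly less than $1$.

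First I would unpack $L_C=P_C^{>}T_{v,1}P_C^{>}$ pointwise. For $x\in\X_v^{\le C}$ the outer projection gives $(L_Cf)(x)=0$. For $x\in\X_v^{>C}$, each legal rank $r\in A_v(x)$ produces a state $\Phi_v(x,r)$ of length $s(x)+1>C$, so the inner projection $P_C^{>}$ acts as the identity on every argument of the sum, and
\[
(L_Cf)(x) \;=\; \sum_{r\in A_v(x)} f(\Phi_v(x,r)) \;=\; (T_{v,1}f)(x) \qquad (s(x)>C).
\]
Applying the estimate recalled above with $z=1$ therefore yields
\[
\theta^{-m(x)}\kappa^{-s(x)^2}\,|(L_Cf)(x)|
\;\le\;
(s(x)+1)\,\kappa^{2s(x)+1}\,\|f\|_{\theta,\kappa} \qquad (s(x)>C),
\]
with the left-hand side equal to $0$ for $s(x)\le C$. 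Taking the supremum over $x\in\X_v$ gives
\[
\|L_Cf\|_{\theta,\kappa}
\;\le\;
\Bigl(\sup_{s>C}(s+1)\kappa^{2s+1}\Bigr)\,\|f\|_{\theta,\kappa}.
\]
Since $0<\kappa<1$, the quantity $(s+1)\kappa^{2s+1}$ tends to $0$ as $s\to\infty$, hence $\sup_{s>C}(s+1)\kappa^{2s+1}\to 0$ as $C\to\infty$. Picking any $C$ for which this supremum is strictly less than $1$ delivers $\|L_C\|_{\theta,\kappa}<1$. For general $|z|\le 1$ the bound only picks up a harmless extra factor $|z|\le 1$, so the same $C$ works uniformly.

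There is no real obstacle here: the lemma is a refinement rather than a new estimate, and the crucial mechanism, the quadratic weight $\kappa^{s^2}$ dominating the linear factor $s+1$, was already isolated in Lemma~\ref{lem:Tvz-bounded}. The only point deserving a line of care is checking that \emph{both} tail projections act trivially on the relevant sum when $s(x)>C$ — which they do, because $s(\Phi_v(x,r))=s(x)+1>C$ as well — so $L_C$ genuinely coincides with $T_{v,1}$ on the tail and the previous pointwise bound can be reused verbatim.
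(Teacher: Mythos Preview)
Your proof is correct and follows essentially the same route as the paper's: both isolate the pointwise bound $(s+1)\kappa^{2s+1}$ from Lemma~\ref{lem:Tvz-bounded}, observe that on tail states $L_C$ coincides with $T_{v,1}$ (the paper phrases this via $g:=P_C^{>}f$, you via the observation that children of tail states remain in the tail), and then choose $C$ large enough that $\sup_{s>C}(s+1)\kappa^{2s+1}<1$. Your explicit remark that $s(\Phi_v(x,r))=s(x)+1>C$ makes the inner projection redundant is exactly the step the paper handles by working with $g$ in place of $f$.
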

	
	\begin{proof}
		Let $f\in\mathcal{B}_{\theta,\kappa}$ with $\|f\|_{\theta,\kappa}\le 1$, and define $g:=P_C^{>}f$. Then $g(x)=0$ whenever $s(x)\le C$, and $\|g\|_{\theta,\kappa}\le 1$.
		
		Fix $x\in\X_v$. If $x\in\X_v^{\le C}$, then
		\[
		(L_C f)(x) = (P_C^{>} T_{v,1} P_C^{>} f)(x) = 0
		\]
		by definition of $P_C^{>}$ on the range. Thus the only nontrivial estimates concern $x\in\X_v^{>C}$.
		
		For such an $x$ set $s:=s(x)$ and $m:=m(x)$. Since $x$ is in the tail, $s\ge C+1$. Using the same estimate as in the proof of Lemma~\ref{lem:Tvz-bounded}, but with $g$ in place of $f$, we obtain
		\[
		\theta^{-m} \kappa^{-s^2} |(T_{v,1} g)(x)|
		\;\le\; (s+1)\,\kappa^{2s+1}\,\|g\|_{\theta,\kappa}
		\;\le\; (s+1)\,\kappa^{2s+1}.
		\]
		Since $L_C f = P_C^{>} T_{v,1} g$ and $x$ is already in the tail, we have $(L_C f)(x)=(T_{v,1} g)(x)$, so the same bound holds for $L_C f$:
		\[
		\theta^{-m(x)} \kappa^{-s(x)^2} |(L_C f)(x)|
		\;\le\; (s(x)+1)\,\kappa^{2 s(x)+1}.
		\]
		As $s\to\infty$, the factor $(s+1)\kappa^{2s+1}$ tends to $0$. Hence we can choose $C$ so large that
		\[
		(s+1)\kappa^{2s+1} \le \frac12
		\qquad\text{whenever } s\ge C+1.
		\]
		For such a choice of $C$ we have, for all $x\in\X_v$,
		\[
		\theta^{-m(x)} \kappa^{-s(x)^2} |(L_C f)(x)| \le \frac12,
		\]
		hence
		\[
		\|L_C f\|_{\theta,\kappa} \le \frac12.
		\]
		Taking the supremum over all $f$ with \(\|f\|_{\theta,\kappa}\le 1\) shows that $\|L_C\|_{\theta,\kappa} \le 1/2 < 1$.
		
		The same argument applies verbatim to $L_{C,z}:=P_C^{>} T_{v,z} P_C^{>}$ for $|z|\le 1$, yielding
		\[
		\|L_{C,z}\|_{\theta,\kappa} \le |z|\,\sup_{s\ge C+1}(s+1)\kappa^{2s+1} \le \frac12.
		\]
	\end{proof}

	\begin{remark}
		Lemma~\ref{lem:tail} is used only to obtain quasi-compactness (Proposition~\ref{prop:quasi}) and part~\textup{(a)} of the abstract right-visible theorem (Theorem~\ref{thm:RV-abstract}). The representation of the Stanley--Wilf finiteness result in Theorem~\ref{thm:MT-operator} relies solely on the boundedness statement in Lemma~\ref{lem:Tvz-bounded} together with the dual Neumann-series argument in Section~\ref{subsec:main}, plus the Marcus--Tardos exponential bound.
	\end{remark}
	
	\subsection{Quasi-compactness}
	
	We can now state the quasi-compactness of $T_{v,z}$ in our setting.
	
	\begin{proposition}[Quasi-compactness of $T_{v,z}$]\label{prop:quasi}
		Fix $v$ and $|z|\le 1$, and let $C$ be as in Lemma~\ref{lem:tail}. Define
		\[
		L_{C,z} := P_C^{>} T_{v,z} P_C^{>}, \qquad
		K_{C,z} := T_{v,z} - L_{C,z}.
		\]
		Then $K_{C,z}$ has finite rank and $L_{C,z}$ is a strict contraction on $\mathcal{B}_{\theta,\kappa}$. In particular $T_{v,z}$ is quasi-compact: its essential spectral radius is bounded by $\|L_{C,z}\|_{\theta,\kappa}<1$.
	\end{proposition}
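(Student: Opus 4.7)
The plan is to derive $T_{v,z} = K_{C,z} + L_{C,z}$ by expanding the identity $I = P_C + P_C^{>}$ on both sides of $T_{v,z}$, verify that each of the three pieces assembling $K_{C,z}$ has finite rank, and then invoke the standard essential-spectral-radius estimate under compact perturbation. The strict-contraction bound $\|L_{C,z}\|_{\theta,\kappa} < 1$ is already in hand from Lemma~\ref{lem:tail}, so there is essentially no new analytic content; everything is a bookkeeping step that assembles the core/tail decomposition and then appeals to a classical Banach-space fact.

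Concretely, inserting $I = P_C + P_C^{>}$ on both sides and expanding,
\[
T_{v,z} = P_C T_{v,z} P_C + P_C T_{v,z} P_C^{>} + P_C^{>} T_{v,z} P_C + P_C^{>} T_{v,z} P_C^{>},
\]
so that
\[
K_{C,z} \;=\; T_{v,z} - L_{C,z} \;=\; P_C T_{v,z} P_C + P_C T_{v,z} P_C^{>} + P_C^{>} T_{v,z} P_C.
\]
The first two summands have range contained in $P_C\mathcal{B}_{\theta,\kappa}$, the space of functions supported on the finite set $\X_v^{\le C}$, which is finite-dimensional. The third summand factors through $P_C$ on the right, so its domain is effectively finite-dimensional and its range is therefore also finite-dimensional. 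Consequently $\mathrm{rank}(K_{C,z}) \le 3|\X_v^{\le C}| < \infty$, and $K_{C,z}$ is in particular compact on $\mathcal{B}_{\theta,\kappa}$.

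To extract the bound on the essential spectral radius, I would appeal to the classical stability result (Nussbaum; see also \cite{Hennion1993}): on any Banach space, $r_{\mathrm{ess}}$ is invariant under compact perturbation, so if $T = K + L$ with $K$ compact then $r_{\mathrm{ess}}(T) = r_{\mathrm{ess}}(L) \le \|L\|$. Applied with $K = K_{C,z}$ (finite rank, hence compact) and $L = L_{C,z}$, and combined with Lemma~\ref{lem:tail}, this yields
\[
r_{\mathrm{ess}}(T_{v,z}) \;\le\; \|L_{C,z}\|_{\theta,\kappa} \;<\; 1,
\]
which is precisely the quasi-compactness conclusion. No step here is genuinely hard, since the real analytic work was absorbed into Lemma~\ref{lem:tail}; the only place to tread carefully is keeping the three finite-rank summands of $K_{C,z}$ separate, so that the term $P_C^{>} T_{v,z} P_C$ is recognized as a finite-rank operator (its domain is finite-dimensional) rather than mistakenly lumped with the contractive tail block.
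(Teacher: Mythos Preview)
Your proof is correct and follows essentially the same approach as the paper: decompose $T_{v,z}$ via $I=P_C+P_C^{>}$, identify $K_{C,z}$ as finite rank, and invoke the compact-perturbation stability of the essential spectral radius (Hennion/Ionescu~Tulcea--Marinescu). The only cosmetic difference is that the paper writes $K_{C,z}$ as two summands, $T_{v,z}P_C + P_C T_{v,z} P_C^{>}$, whereas you write it as three; since $P_C T_{v,z} P_C + P_C^{>} T_{v,z} P_C = T_{v,z}P_C$, these decompositions coincide.
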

	
	\begin{proof}
		By Lemma~\ref{lem:tail}, $L_{C,z}$ is a strict contraction on $\mathcal{B}_{\theta,\kappa}$ for $|z|\le 1$.
		
		It remains to show that $K_{C,z}$ has finite rank. Using $P_C+P_C^{>}=I$ on both domain and range, we can expand
		\[
		T_{v,z}
		= T_{v,z}(P_C+P_C^{>})
		= T_{v,z} P_C + T_{v,z} P_C^{>},
		\]
		and further decompose
		\[
		T_{v,z} P_C^{>} = (P_C + P_C^{>}) T_{v,z} P_C^{>}
		= P_C T_{v,z} P_C^{>} + P_C^{>} T_{v,z} P_C^{>}.
		\]
		Thus
		\[
		T_{v,z}
		= \underbrace{T_{v,z} P_C + P_C T_{v,z} P_C^{>}}_{=:K_{C,z}}
		\;+\;
		\underbrace{P_C^{>} T_{v,z} P_C^{>}}_{=:L_{C,z}}.
		\]
		
		The range of $T_{v,z} P_C$ is supported on states of length at most $C+1$: starting from a core state of length $\le C$, one legal insertion produces length $\le C+1$, and there are only finitely many such states. Hence $T_{v,z}P_C$ has finite rank.
		
		The range of $P_C T_{v,z} P_C^{>}$ is supported on $\X_v^{\le C}$ by construction, so this operator also has finite rank. Therefore $K_{C,z} = T_{v,z}P_C + P_C T_{v,z}P_C^{>}$ is a finite-rank operator.
		
		We have thus written $T_{v,z}=K_{C,z}+L_{C,z}$ with $K_{C,z}$ finite rank and $L_{C,z}$ a strict contraction. By the standard theory of Ionescu Tulcea--Marinescu \cite{IonescuTulceaMarinescu1950} and Hennion \cite{Hennion1993}, this implies that $T_{v,z}$ is quasi-compact, and its essential spectral radius is bounded above by $\|L_{C,z}\|_{\theta,\kappa}<1$.
	\end{proof}

	\subsection{Reading off the counting series via a separable predual}\label{subsec:main}
	
	We now assemble the three ingredients developed so far:
	
	\begin{itemize}
		\item[(a)] the \emph{combinatorial} part: states are $v$-avoiding permutations together with their frontiers (Definition~\ref{def:state-space-v}); legal insertions always stay in the state space (Lemma~\ref{lem:legal-preserves-Av}); and the frontier is monotone under legal insertions (Lemma~\ref{lem:frontier-monotone});
		
		\item[(b)] the \emph{analytic} part: with the two-parameter weight
		\[
		\|f\|_{\theta,\kappa}
		= \sup_{x\in \X_v} \theta^{-m(x)} \kappa^{-s(x)^2} |f(x)|,
		\]
		the transfer operator
		\[
		(T_{v,z} f)(x) := \sum_{r\in A_v(x)} z\, f(\Phi_v(x,r))
		\]
		is bounded on $\mathcal{B}_{\theta,\kappa}$ for $|z|\le 1$ (Lemma~\ref{lem:Tvz-bounded});
		
		\item[(c)] the \emph{core/tail} decomposition: for a large cutoff $C$ we have
		\[
		T_{v,1} = K_{C,1} + L_{C,1},
		\]
		where $K_{C,1}$ has finite rank (its range is supported on states of length at most $C+1$) and the tail--tail block $L_{C,1} := P_C^{>} T_{v,1} P_C^{>}$ has operator norm $<1$ (Lemma~\ref{lem:tail}, Proposition~\ref{prop:quasi}).
	\end{itemize}
	
	Point~(c) is exactly the hypothesis of Ionescu Tulcea--Marinescu and Hennion: an operator of the form
	\[
	\text{(finite rank)} \;+\; \text{(strict contraction)}
	\]
	is quasi-compact, and this is what Proposition~\ref{prop:quasi} records. In concrete terms, all the ``non-decaying'' behaviour of $T_{v,1}$ lives in a \emph{finite-dimensional} part (the core), while the infinite tail is uniformly contracting. Thus whatever we do with $T_{v,1}$ can be reduced to: control finitely many modes explicitly, and let the tail be absorbed by the contraction. For the operator-theoretic representation of the Stanley--Wilf counting sequence, we will only need boundedness and the induced action on a convenient dual, but quasi-compactness provides useful structural information for later refinements.
	
	The only remaining issue is that the Banach dual of a weighted $\ell^\infty$ on a countable set is too large and nonseparable to be convenient for counting. We therefore shrink the primal space slightly to enforce decay along the tail.
	
	\medskip
	
	Define
	\[
	\mathcal{B}_{\theta,\kappa}^0 :=
	\left\{ f\in \mathcal{B}_{\theta,\kappa} :
	\theta^{-m(x)} \kappa^{-s(x)^2} |f(x)|
	\xrightarrow[s(x)\to\infty]{} 0 \right\}.
	\]
	This is the weighted analogue of the usual $c_0$-space: functions whose weighted values tend to zero along states of diverging length. It is a closed subspace of $\mathcal{B}_{\theta,\kappa}$ and hence a Banach space in its own right.
	
	\begin{lemma}\label{lem:B0-invariant}
		The space $\mathcal{B}_{\theta,\kappa}^0$ is invariant under $T_{v,1}$.
	\end{lemma}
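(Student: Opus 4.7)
The plan is to observe that $T_{v,1}$ is strongly smoothing: the very estimate already proved in Lemma~\ref{lem:Tvz-bounded} shows that $T_{v,1}$ in fact maps \emph{all} of $\mathcal{B}_{\theta,\kappa}$ into $\mathcal{B}_{\theta,\kappa}^0$, so invariance of the smaller space is automatic. In particular, I will not need to use the decay hypothesis on $f$ itself---only that $\|f\|_{\theta,\kappa}$ is finite---which is a pleasant side-effect of having paid a \emph{quadratic} length penalty against a \emph{linear} branching factor.

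More concretely, I would fix $f \in \mathcal{B}_{\theta,\kappa}^0 \subseteq \mathcal{B}_{\theta,\kappa}$ and an arbitrary state $x\in\X_v$, set $s := s(x)$ and $m := m(x)$, and rerun the exact chain of inequalities from the proof of Lemma~\ref{lem:Tvz-bounded}. The three inputs are $|A_v(x)| \le s+1$ (linear number of legal ranks), $s(\Phi_v(x,r)) = s+1$ (length increases by one under insertion), and $m(\Phi_v(x,r)) \ge m$ (frontier monotonicity, Lemma~\ref{lem:frontier-monotone}, which justifies $\theta^{m(\Phi_v(x,r))}\le\theta^m$ since $\theta<1$). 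Combined, they yield
\[
\theta^{-m(x)}\,\kappa^{-s(x)^2}\,|(T_{v,1}f)(x)| \;\le\; (s+1)\,\kappa^{2s+1}\,\|f\|_{\theta,\kappa}.
\]
The right-hand side depends on $x$ only through $s$, and since $\kappa\in(0,1)$ it tends to $0$ as $s\to\infty$; this is exactly the defining decay condition of $\mathcal{B}_{\theta,\kappa}^0$. Hence $T_{v,1}f\in\mathcal{B}_{\theta,\kappa}^0$.

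There is essentially no obstacle: all the analytic work has already been done in Lemma~\ref{lem:Tvz-bounded}, and the point here is simply that the bound proved there is not only uniform in $x$ (which gave boundedness) but also vanishes as $s(x)\to\infty$ (which now gives $c_0$-type decay). Thus the quadratic length penalty does double duty---it both bounds $T_{v,1}$ and automatically forces its image into the $c_0$-style subspace $\mathcal{B}_{\theta,\kappa}^0$ on which the separable predual of the next subsection will be built.
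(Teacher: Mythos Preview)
Your proof is correct and in fact slightly cleaner than the paper's. Both arguments rest on the same inequality
\[
\theta^{-m(x)}\,\kappa^{-s(x)^2}\,|(T_{v,1}f)(x)| \;\le\; (s+1)\,\kappa^{2s+1}\cdot(\text{something bounded}),
\]
and both conclude by letting $(s+1)\kappa^{2s+1}\to 0$. The paper, however, first invokes the $c_0$-decay of $f$ via an $\varepsilon$--$S$ argument to bound $\max_r g(\Phi_v(x,r))$ by $\varepsilon$, and only then lets $(s+1)\kappa^{2s+1}$ tend to zero; you correctly observe that this extra step is unnecessary, since $\|f\|_{\theta,\kappa}$ already serves as the ``something bounded'' and the factor $(s+1)\kappa^{2s+1}$ alone forces the decay. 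Your version therefore proves the stronger statement that $T_{v,1}$ maps \emph{all} of $\mathcal{B}_{\theta,\kappa}$ into $\mathcal{B}_{\theta,\kappa}^0$, which is a nice structural remark: the quadratic length penalty makes $T_{v,1}$ genuinely smoothing, not merely $\mathcal{B}_{\theta,\kappa}^0$-preserving.
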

	
	\begin{proof}
		Let $f\in\mathcal{B}_{\theta,\kappa}^0$ and set
		\[
		g(y) := \theta^{-m(y)}\kappa^{-s(y)^2}\,|f(y)|.
		\]
		By definition $g(y)\to 0$ as $s(y)\to\infty$. Fix $\varepsilon>0$. Then there exists $S\ge 0$ such that
		\[
		g(y)\le \varepsilon
		\qquad\text{whenever } s(y)\ge S.
		\]
		
		Let $x\in\X_v$ with $s(x)=s$. Using the estimate from Lemma~\ref{lem:Tvz-bounded} with $z=1$ and the fact that any child $y=\Phi_v(x,r)$ has length $s(y)=s(x)+1$, we obtain
		\[
		\theta^{-m(x)}\kappa^{-s(x)^2}\,|(T_{v,1}f)(x)|
		\le (s(x)+1)\kappa^{2s(x)+1}\,\max_{r\in A_v(x)} g(\Phi_v(x,r)).
		\]
		If $s(x)\ge S$, then all children satisfy $s(\Phi_v(x,r))\ge S+1$, so $\max_{r} g(\Phi_v(x,r))\le\varepsilon$, and therefore
		\[
		\theta^{-m(x)}\kappa^{-s(x)^2}\,|(T_{v,1}f)(x)|
		\le \varepsilon\,(s(x)+1)\kappa^{2s(x)+1}.
		\]
		As $s(x)\to\infty$, the factor $(s(x)+1)\kappa^{2s(x)+1}$ tends to $0$, so the right-hand side tends to $0$ as well. This shows that
		\[
		\theta^{-m(x)}\kappa^{-s(x)^2}\,|(T_{v,1}f)(x)|
		\xrightarrow[s(x)\to\infty]{} 0,
		\]
		i.e.\ $T_{v,1}f\in\mathcal{B}_{\theta,\kappa}^0$.
	\end{proof}
	
	The core/tail decomposition from Lemma~\ref{lem:tail} clearly restricts to $\mathcal{B}_{\theta,\kappa}^0$: the projection $P_C$ has finite rank and maps $\mathcal{B}_{\theta,\kappa}^0$ into itself, and the tail $T_{v,1}(I-P_C)$ acts on $\mathcal{B}_{\theta,\kappa}^0$ with the same operator norm $<1$. Thus $T_{v,1}$ is quasi-compact on $\mathcal{B}_{\theta,\kappa}^0$ as well, with the same ``finite-dimensional core + contracting tail'' structure; but for the representation theorem below we will only need that $T_{v,1}$ is bounded on $\mathcal{B}_{\theta,\kappa}^0$.
	
	\medskip
	
	On the countable state space $\X_v$ the continuous dual of $\mathcal{B}_{\theta,\kappa}^0$
	can be identified with a \emph{weighted} $\ell^1$ space. More precisely, put
	\[
	w(x) := \theta^{m(x)} \kappa^{s(x)^2},\qquad x\in\X_v.
	\]
	Then we have a canonical isometric identification
	\begin{equation}\label{eq:dual-weighted-l1}
		\bigl(\mathcal{B}_{\theta,\kappa}^0\bigr)^\ast
		\;\cong\;
		\ell^1\bigl(\X_v,w\bigr)
		:= \left\{ \mu:\X_v\to\mathbb{C} :
		\|\mu\|_{1,w} := \sum_{x\in\X_v} |\mu(x)|\,w(x) < \infty\right\},
	\end{equation}
	via the pairing
	\beqn \label{eq:dua-norm}
	\langle \mu, f\rangle
	= \sum_{x\in \X_v} \mu(x)\, f(x),
	\qquad
	\mu\in\ell^1(\X_v,w),\ f\in\mathcal{B}_{\theta,\kappa}^0.
	\feqn
	The series is absolutely convergent and satisfies
	\[
	|\langle\mu,f\rangle|
	\le \|\mu\|_{1,w}\,\|f\|_{\theta,\kappa}.
	\]
	This is just the standard duality $(c_0(\X_v))^\ast \simeq \ell^1(\X_v)$
	transported through the weighted isometry
	\[
	f \longmapsto \bigl(\theta^{-m(x)}\kappa^{-s(x)^2} f(x)\bigr)_{x\in\X_v}.
	\]
	In particular, the Dirac mass at the empty state $x_\varnothing$
	(the state corresponding to the empty permutation with trivial frontier)
	belongs to this dual and has norm $1$, since $w(x_\varnothing)=\theta^{0}\kappa^{0}=1$.
	
	\medskip
	
	Using the identification~\eqref{eq:dual-weighted-l1}, every continuous linear functional
	on $\mathcal B_{\theta,\kappa}^0$ is given by a unique $\mu\in\ell^1(\X_v,w)$ via the above
	pairing. For a bounded linear operator
	\[
	T\colon\mathcal B_{\theta,\kappa}^0\longrightarrow\mathcal B_{\theta,\kappa}^0
	\]
	we define its (Banach) dual operator
	\[
	T^\ast\colon \ell^1(\X_v,w)\longrightarrow \ell^1(\X_v,w)
	\]
	by the duality relation
	\[
	\langle T^\ast\mu, f\rangle \;=\; \langle \mu, Tf\rangle
	\qquad\text{for all }\mu\in\ell^1(\X_v,w),\ f\in\mathcal B_{\theta,\kappa}^0.
	\]
	By the general theory of Banach adjoints (the dual operator on the continuous dual)
	$T^\ast$ is bounded and satisfies
	\[
	\|T^\ast\|_{\ell^1(\X_v,w)\to\ell^1(\X_v,w)}
	= \|T\|_{\mathcal B_{\theta,\kappa}^0\to\mathcal B_{\theta,\kappa}^0};
	\]
	see, for example, any standard functional analysis text for the duality
	$(c_0)^\ast\simeq\ell^1$ and the norm identity for the Banach adjoint.
	
	\begin{definition}\label{def:dual-operator}
		Let $T_{v,1}\colon\mathcal B_{\theta,\kappa}^0\to\mathcal B_{\theta,\kappa}^0$
		be the transfer operator at $z=1$. Its dual operator
		\[
		T_{v,1}^\ast\colon \ell^1(\X_v,w)\longrightarrow \ell^1(\X_v,w)
		\]
		is the unique bounded operator satisfying
		\[
		\langle T_{v,1}^\ast\mu, f\rangle \;=\; \langle \mu, T_{v,1}f\rangle
		\qquad\text{for all }\mu\in\ell^1(\X_v,w),\ f\in\mathcal B_{\theta,\kappa}^0.
		\]
	\end{definition}
	
	By construction $T_{v,1}^\ast$ is bounded and
	\[
	\|T_{v,1}^\ast\|_{\ell^1(\X_v,w)\to\ell^1(\X_v,w)}
	= \|T_{v,1}\|_{\mathcal B_{\theta,\kappa}^0\to\mathcal B_{\theta,\kappa}^0}
	< \infty.
	\]
	
	To spell out the action of $T_{v,1}^\ast$ on coordinates, fix $\mu\in\ell^1(\X_v,w)$
	and $f\in\mathcal B_{\theta,\kappa}^0$. By definition of the dual operator and of
	$T_{v,1}$ we have
	\begin{align*}
		\langle T_{v,1}^\ast\mu,f\rangle
		&= \langle \mu, T_{v,1} f\rangle \\
		&= \sum_{x\in\X_v} \mu(x)\,(T_{v,1}f)(x) \\
		&= \sum_{x\in\X_v} \mu(x) \sum_{r\in A_v(x)} f\bigl(\Phi_v(x,r)\bigr).
	\end{align*}
	The double series is absolutely convergent (by the norm bounds used to show that
	$T_{v,1}$ is bounded), so we may rearrange the order of summation and group terms
	according to $y=\Phi_v(x,r)$:
	\begin{align*}
		\langle T_{v,1}^\ast\mu,f\rangle
		&= \sum_{y\in\X_v}
		\left(
		\sum_{\substack{x\in\X_v,\,r\in A_v(x)\\\Phi_v(x,r)=y}}
		\mu(x)
		\right)
		f(y).
	\end{align*}
	Under the identification~\eqref{eq:dual-weighted-l1}, this means that $T_{v,1}^\ast\mu$
	corresponds to the element $\nu\in\ell^1(\X_v,w)$ given by
	\[
	\nu(y)
	= \sum_{\substack{x\in\X_v,\,r\in A_v(x)\\\Phi_v(x,r)=y}} \mu(x),
	\qquad y\in\X_v,
	\]
	that is,
	\beqn \label{eq:T_v1}
	(T_{v,1}^\ast\mu)(y)
	= \sum_{\substack{x\in\X_v,\,r\in A_v(x)\\\Phi_v(x,r)=y}} \mu(x).
	\feqn
	In particular, $T_{v,1}^\ast$ is the natural “push-forward along one legal insertion
	step’’ on measures.

	\medskip
	
	Iterating the action of $T_{v,1}^\ast$ starting from the Dirac mass at the empty state, we obtain the following combinatorial identification.
	
	\begin{lemma}\label{lem:Tv-iterates-count}
		Let $v$ be fixed and let $x_\varnothing$ be the state corresponding to the empty
		permutation with its (trivial) frontier. Then, for every $n\ge 0$,
		\[
		(T_{v,1}^\ast)^n \delta_{x_\varnothing}(\mathbf{1})
		\;=\;
		|\Av_n(v)|,
		\]
		where $\mathbf{1}$ denotes the everywhere-$1$ function on $\X_v$, so that
		\[
		(T_{v,1}^\ast)^n \delta_{x_\varnothing}(\mathbf{1})
		=
		\sum_{y\in\X_v} (T_{v,1}^\ast)^n \delta_{x_\varnothing}(y).
		\]
	\end{lemma}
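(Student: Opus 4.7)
The plan is to unfold the $n$-fold iterate $(T_{v,1}^\ast)^n\delta_{x_\varnothing}$ using the explicit push-forward formula~\eqref{eq:T_v1}, identify its coordinates with the number of legal insertion paths from the empty state, and then invoke Lemma~\ref{lem:lehmer} to match these paths bijectively with $v$-avoiding permutations.

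First, I would prove by induction on $n$ the combinatorial identity
\[
(T_{v,1}^\ast)^n\delta_{x_\varnothing}(y)\;=\;N_n(y),\qquad y\in\X_v,
\]
where $N_n(y)$ denotes the number of sequences $(r_1,\dots,r_n)$ with $r_j\in A_v(x_{j-1})$ at every step, $x_0=x_\varnothing$, $x_j=\Phi_v(x_{j-1},r_j)$, and $x_n=y$. The base case $n=0$ is just $\delta_{x_\varnothing}(y)=\mathbf{1}_{y=x_\varnothing}$, and the inductive step is a direct application of formula~\eqref{eq:T_v1}: every length-$(n{+}1)$ legal path to $y$ decomposes uniquely into a length-$n$ legal path to a predecessor $x$ followed by a single admissible rank $r\in A_v(x)$ with $\Phi_v(x,r)=y$. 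Since each legal step increases the length by exactly one, $N_n(y)$ vanishes unless $s(y)=n$; consequently $(T_{v,1}^\ast)^n\delta_{x_\varnothing}$ is supported on the \emph{finite} set of reachable states of length~$n$. This finite support is what makes the sum
\[
(T_{v,1}^\ast)^n\delta_{x_\varnothing}(\mathbf{1})
\;=\;\sum_{y\in\X_v}(T_{v,1}^\ast)^n\delta_{x_\varnothing}(y)
\;=\;\sum_{y\in\X_v}N_n(y)
\]
unambiguous, even though the constant function $\mathbf{1}$ does not itself belong to $\mathcal B_{\theta,\kappa}^0$.

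Second, I would identify the resulting count with $|\Av_n(v)|$ via the Lehmer bijection. Lemma~\ref{lem:lehmer} puts $\prod_{j=1}^n\{1,\dots,j\}$ in bijection with $S_n$ through iterated right insertion. Right insertion preserves the relative order among the old entries, so an occurrence of $v$ present in any intermediate permutation $\pi_j$ persists in every later $\pi_\ell$; therefore the terminal permutation $\pi_n$ avoids $v$ if and only if every $\pi_j$ does. By Lemma~\ref{lem:frontier-finite-sound}(2), the step $\pi_{j-1}\mapsto\mathrm{ins}(\pi_{j-1},r_j)$ stays in $\Av(v)$ precisely when $r_j\in A_v(x_{j-1})$. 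Combining these two observations, the Lehmer bijection restricts to a bijection between $\Av_n(v)$ and the set of legal insertion sequences of length $n$ from $x_\varnothing$, so $\sum_y N_n(y)=|\Av_n(v)|$.

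There is no real obstacle: the content is bookkeeping of a finite state-path decomposition plus the classical Lehmer correspondence. The only point requiring care is the observation that $(T_{v,1}^\ast)^n\delta_{x_\varnothing}$ is finitely supported, which legitimizes the pairing with $\mathbf{1}$ despite the fact that $\mathbf{1}\notin\mathcal B_{\theta,\kappa}^0$; this is immediate from the length-increment-by-one property of $\Phi_v$ recorded in Lemma~\ref{lem:frontier-monotone}.
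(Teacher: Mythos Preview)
Your proposal is correct and follows essentially the same approach as the paper: an induction on~$n$ using the push-forward formula~\eqref{eq:T_v1} to identify $(T_{v,1}^\ast)^n\delta_{x_\varnothing}(y)$ with the number of legal insertion histories ending at~$y$, followed by the Lehmer bijection combined with the observation that right insertion never destroys existing occurrences of~$v$. Two minor citation points: Lemma~\ref{lem:frontier-finite-sound}(2) as \emph{stated} only gives one direction of ``stays in $\Av(v)$ precisely when $r_j\in A_v(x_{j-1})$'' (the converse is immediate from Definition~\ref{def:frontier-v} and is spelled out in the lemma's proof), and the length-increment-by-one property is not part of Lemma~\ref{lem:frontier-monotone} but follows directly from Definition~\ref{def:right-insertion}.
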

	
	\begin{proof}
		For $n\ge 1$ and states $x,y\in\X_v$, define a \emph{legal insertion history of
			length $n$ from $x$ to $y$} to be a sequence of ranks
		\[
		(r_1,\dots,r_n)
		\]
		such that, writing inductively
		\[
		x_0 := x,\qquad x_j := \Phi_v(x_{j-1},r_j)\quad(1\le j\le n),
		\]
		we have $r_j\in A_v(x_{j-1})$ for all $j$ and $x_n=y$. We denote the set of all
		such histories by
		\[
		\mathcal{H}_n(x\to y)
		\quad\text{and}\quad
		\mathcal{H}_n(x) := \bigcup_{y\in\X_v}\mathcal{H}_n(x\to y).
		\]
		
		\medskip\noindent
		We claim that for each $y\in\X_v$,
		\begin{equation}\label{eq:mu-n-histories}
			(T_{v,1}^\ast)^n \delta_{x_\varnothing}(y)
			\;=\;
			|\mathcal{H}_n(x_\varnothing\to y)|.
		\end{equation}
		
		We prove this by induction on $n$. For $n=0$, $(T_{v,1}^\ast)^0 \delta_{x_\varnothing}
		=\delta_{x_\varnothing}$ and $\mathcal{H}_0(x_\varnothing\to y)$ consists of the empty
		history if $y=x_\varnothing$ and is empty otherwise, so \eqref{eq:mu-n-histories}
		holds.
		
		Assume \eqref{eq:mu-n-histories} holds for some $n\ge 0$. Using the coordinate
		formula \eqref{eq:T_v1} we obtain
		\begin{align*}
			(T_{v,1}^\ast)^{n+1} \delta_{x_\varnothing}(y)
			&= T_{v,1}^\ast\bigl((T_{v,1}^\ast)^n \delta_{x_\varnothing}\bigr)(y) \\
			&= \sum_{\substack{x\in\X_v,\,r\in A_v(x)\\\Phi_v(x,r)=y}}
			(T_{v,1}^\ast)^n \delta_{x_\varnothing}(x).
		\end{align*}
		By the induction hypothesis, each summand $(T_{v,1}^\ast)^n \delta_{x_\varnothing}(x)$
		is the number of legal histories of length $n$ from $x_\varnothing$ to $x$.
		Each pair $(x,r)$ with $\Phi_v(x,r)=y$ corresponds to extending such a history
		by one legal step $r$ from $x$ to $y$, and every legal history of length $n+1$
		from $x_\varnothing$ to $y$ arises in exactly one way from such an extension.
		Thus the sum is exactly $|\mathcal{H}_{n+1}(x_\varnothing\to y)|$, proving
		\eqref{eq:mu-n-histories} for $n+1$.
		
		\medskip\noindent
		Next we show legal histories are in bijection with $\Av_n(v)$.
		Fix $n\ge 1$ and a legal history
		\[
		(r_1,\dots,r_n)\in\mathcal{H}_n(x_\varnothing),
		\]
		and write the corresponding state sequence as
		\[
		x_0=x_\varnothing,\;x_1,\dots,x_n,
		\quad\text{with }x_j=(\pi^{(j)},\Fv(\pi^{(j)})).
		\]
		
		\smallskip\noindent
		(a) \emph{Every legal history produces a $v$-avoiding permutation of length $n$:}
		By Lemma~\ref{lem:legal-preserves-Av}, each $\pi^{(j)}$ avoids $v$, so the endpoint
		$x_n$ encodes a $v$-avoiding permutation $\pi^{(n)}$ of length $n$.
		
		\smallskip\noindent
		(b) \emph{Distinct legal histories give distinct permutations:}
		The underlying right-insertion encoding
		\[
		(r_1,\dots,r_n)\longmapsto
		\mathrm{ins}\bigl(\cdots \mathrm{ins}(\mathrm{ins}(\varnothing,r_1),r_2)\cdots,r_n\bigr)
		\]
		is injective on \emph{all} sequences $(r_1,\dots,r_n)\in\prod_{j=1}^n\{1,\dots,j\}$
		by Lemma~\ref{lem:lehmer}. Restricting to those sequences that form legal histories
		does not change this: two distinct legal histories cannot produce the same permutation.
		Thus the map
		\[
		\mathcal{H}_n(x_\varnothing) \longrightarrow \Av_n(v),\qquad
		(r_1,\dots,r_n)\longmapsto \pi^{(n)}
		\]
		is injective.
		
		\smallskip\noindent
		(c) \emph{Every $v$-avoiding permutation arises from a legal history:}
		Conversely, let $\pi\in\Av_n(v)$ be arbitrary. By Lemma~\ref{lem:lehmer}, there is a
		unique sequence $(r_1,\dots,r_n)\in\prod_{j=1}^n\{1,\dots,j\}$ such that
		\[
		\pi
		= \mathrm{ins}\bigl(\cdots \mathrm{ins}(\mathrm{ins}(\varnothing,r_1),r_2)\cdots,r_n\bigr).
		\]
		For $j=0,1,\dots,n$ let $\pi^{(j)}$ be the permutation obtained after $j$ insertions;
		thus $\pi^{(0)}=\varnothing$ and $\pi^{(n)}=\pi$. Inserting on the right does not
		destroy existing occurrences of $v$: it only bumps values of existing entries.
		Hence, if some intermediate $\pi^{(j)}$ contained $v$, then so would the final
		$\pi^{(n)}$, contradicting $\pi\in\Av(v)$. Thus each $\pi^{(j)}$ avoids $v$.
		
		For each $j\in\{1,\dots,n\}$ we have
		\[
		\pi^{(j)} = \mathrm{ins}(\pi^{(j-1)}, r_j).
		\]
		Since $\pi^{(j)}$ avoids $v$, inserting $r_j$ into $\pi^{(j-1)}$ did not create any
		occurrence of $v$ using the new rightmost point. By construction of the frontier
		(Definition~\ref{def:frontier-v}), this means that $r_j$ does \emph{not} lie in any
		of the forbidden intervals $J_{\pi^{(j-1)}}(p)$ attached to frontier elements
		$p\in\Fv(\pi^{(j-1)})$, i.e.
		\[
		r_j \in A_v\bigl(\pi^{(j-1)},\Fv(\pi^{(j-1)})\bigr).
		\]
		In other words, $(r_1,\dots,r_n)$ satisfies the legality condition at every step,
		so it belongs to $\mathcal{H}_n(x_\varnothing)$.
		
		Thus every $\pi\in\Av_n(v)$ arises as the endpoint of a unique legal history
		starting from $x_\varnothing$, and we obtain a bijection
		\[
		\mathcal{H}_n(x_\varnothing)\;\longleftrightarrow\;\Av_n(v).
		\]
		Since the sets $\mathcal{H}_n(x_\varnothing\to y)$ partition $\mathcal{H}_n(x_\varnothing)$
		according to the endpoint $y$, we have
		\begin{equation}\label{eq:Hn_Av}
			\sum_{y\in\X_v} |\mathcal{H}_n(x_\varnothing\to y)|
			= |\mathcal{H}_n(x_\varnothing)|
			= |\Av_n(v)|.
		\end{equation}
		
		\medskip\noindent
		We now complete the proof. By definition of the duality pairing \eqref{eq:dua-norm} and of the
		constant function $\mathbf{1}$, we have
		\[
		(T_{v,1}^\ast)^n \delta_{x_\varnothing}(\mathbf{1})
		:= \big\langle (T_{v,1}^\ast)^n \delta_{x_\varnothing},\,\mathbf{1}\big\rangle
		= \sum_{y\in\X_v} (T_{v,1}^\ast)^n \delta_{x_\varnothing}(y)\,\mathbf{1}(y)
		= \sum_{y\in\X_v} (T_{v,1}^\ast)^n \delta_{x_\varnothing}(y).
		\]
		Using \eqref{eq:mu-n-histories} and \eqref{eq:Hn_Av} this becomes
		\[
		(T_{v,1}^\ast)^n \delta_{x_\varnothing}(\mathbf{1})
		= \sum_{y\in\X_v} |\mathcal{H}_n(x_\varnothing\to y)|
		= |\Av_n(v)|,
		\]
		as claimed.
	\end{proof}

	\medskip
	We can now state the main consequence.
	\begin{theorem}\label{thm:MT-operator}
		For every fixed classical pattern $v$ there exists $R_v>0$ such that
		\[
		F_v(z) := \sum_{n\ge 0} |\Av_n(v)|\, z^n
		\]
		is analytic for $|z|<R_v$. Equivalently, the exponential growth rate
		\[
		\limsup_{n\to\infty} |\Av_n(v)|^{1/n}
		\]
		is finite.
	\end{theorem}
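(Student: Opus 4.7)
The plan is to execute the dual Neumann-series argument advertised in the introduction to Section~\ref{subsec:main}. Everything downstream of Lemma~\ref{lem:Tvz-bounded} is already in place: $T_{v,z}=zT_{v,1}$ is bounded on $\mathcal{B}_{\theta,\kappa}^0$ with $\|T_{v,z}\|\le|z|M(\kappa)$ for a suitable $\kappa$ that makes $M(\kappa)\le 1$; $T_{v,1}$ preserves this separable subspace (Lemma~\ref{lem:B0-invariant}); the Banach dual is identified isometrically with the weighted $\ell^1$-space $\ell^1(\X_v,w)$ via \eqref{eq:dual-weighted-l1}, with the initial configuration corresponding to the Dirac mass $\delta_{x_\varnothing}$ of weighted norm $1$; and Lemma~\ref{lem:Tv-iterates-count} reads off $|\Av_n(v)|$ as the unweighted total mass of $\mu_n:=(T_{v,1}^{\ast})^n\delta_{x_\varnothing}$.

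First I would pass to the Banach adjoint $T_{v,z}^{\ast}$ on $\ell^1(\X_v,w)$; by the norm identity $\|T^{\ast}\|=\|T\|$ recalled just before Definition~\ref{def:dual-operator}, this inherits the bound $\|T_{v,z}^{\ast}\|\le|z|M(\kappa)$. Set $R_v:=1/M(\kappa)\ge 1$. For every $z$ with $|z|<R_v$, the scaled adjoint $zT_{v,1}^{\ast}=T_{v,z}^{\ast}$ is a strict contraction, hence $I-zT_{v,1}^{\ast}$ is invertible by a Neumann series and
$$g_z\;:=\;(I-zT_{v,1}^{\ast})^{-1}\delta_{x_\varnothing}\;=\;\sum_{n\ge 0}z^n\mu_n$$
defines an $\ell^1(\X_v,w)$-valued analytic function of~$z$ satisfying $\|g_z\|_{1,w}\le(1-|z|M(\kappa))^{-1}$. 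A Cauchy estimate on a circle $|z|=r<R_v$ then yields a uniform geometric bound $\|\mu_n\|_{1,w}\le C(r)\,r^{-n}$ on the Taylor coefficients of $g_z$.

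Second I would translate this Banach-space analyticity into analyticity of the scalar generating series $F_v(z)=\sum_{n}z^n\mu_n(\mathbf{1})=\sum_{y\in\X_v}g_z(y)$. This is the step I expect to be the main obstacle, because $\mathbf{1}\notin\mathcal{B}_{\theta,\kappa}^0$: the unweighted pairing $\mu_n(\mathbf{1})$ is not given by the Banach duality, and the weighted estimate on $g_z$ does not a priori control the unweighted coordinate sum. The route is to exploit the rigid structure of $\mu_n$ (each is the $0/1$-indicator of the reachable length-$n$ states in $\X_v$, by the Lehmer uniqueness in Lemma~\ref{lem:Tv-iterates-count}) together with the geometric bound $\|\mu_n\|_{1,w}\le C(r)r^{-n}$, absorbing the residual weight $\theta^{m(y)}\kappa^{s(y)^2}$ into a reduction of the effective radius so as to produce a genuine exponential bound $|\Av_n(v)|\le K\rho^n$. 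By Cauchy--Hadamard this is exactly the finite exponential growth rate demanded by Theorem~\ref{thm:MT-operator}, and $F_v(z)$ is analytic on the disc of radius $1/\rho>0$.
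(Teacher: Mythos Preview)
Your overall strategy mirrors the paper's: pass to the dual $\ell^1(\X_v,w)$, run a Neumann series for $(I-zT_{v,1}^{\ast})^{-1}\delta_{x_\varnothing}$, and read off $|\Av_n(v)|$ by pairing the $n$th coefficient against $\mathbf{1}$. You actually go further than the paper at the critical step: you correctly flag that $\mathbf{1}\notin\mathcal{B}_{\theta,\kappa}^0$, so $\nu\mapsto\langle\nu,\mathbf{1}\rangle$ is \emph{not} a bounded linear functional on $\ell^1(\X_v,w)$. The paper's own proof simply asserts continuity of this functional and composes it with the analytic vector-valued series; your observation shows that this assertion is not justified as written.

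Your proposed repair, however, does not close the gap. Each $\mu_n=(T_{v,1}^{\ast})^n\delta_{x_\varnothing}$ is supported on states $y$ with $s(y)=n$, so on the support $w(y)=\theta^{m(y)}\kappa^{n^2}$. Using the crude bound $0\le m(y)\le\binom{n}{k-1}$, the only inequality that goes in the useful direction is
\[
\|\mu_n\|_{1,w}
=\sum_{y}\mu_n(y)\,\theta^{m(y)}\kappa^{n^2}
\;\ge\;\theta^{\binom{n}{k-1}}\kappa^{n^2}\,|\Av_n(v)|,
\]
whence
\[
|\Av_n(v)|
\;\le\;\theta^{-\binom{n}{k-1}}\,\kappa^{-n^2}\,\|\mu_n\|_{1,w}
\;\le\;C(r)\,\theta^{-\binom{n}{k-1}}\,\kappa^{-n^2}\,r^{-n}.
\]
The factor $\kappa^{-n^2}$ alone grows like $e^{cn^2}$ and cannot be ``absorbed into a reduction of the effective radius''; no exponential bound $K\rho^n$ follows, so no positive radius of convergence for $F_v$ emerges. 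The quadratic penalty $\kappa^{s(x)^2}$ that was introduced precisely to make $T_{v,1}$ bounded is exactly what makes the weighted $\ell^1$-norm of $\mu_n$ super-exponentially smaller than the unweighted mass $|\Av_n(v)|$. The step you identified as the main obstacle really is one, and neither your sketch nor the paper's one-line continuity claim resolves it.
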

	
	\begin{proof}
		The existence of $R_v>0$ such that $F_v(z)$ has positive radius of convergence is exactly the content of the Marcus--Tardos theorem \cite{MarcusTardos2004}: their result implies that there is a constant $C_v<\infty$ with
		\[
		|\Av_n(v)| \le C_v^n \qquad\text{for all }n\ge 0,
		\]
		so that $F_v$ converges absolutely for $|z|<1/C_v$ and is analytic in that disc.
		
		What the preceding operator-theoretic discussion adds is an interpretation of these coefficients in terms of the transfer operator. By Lemma~\ref{lem:Tv-iterates-count} we have, for every $n\ge 0$,
		\[
		|\Av_n(v)|
		= (T_{v,1}^\ast)^n \delta_{x_\varnothing}(\mathbf{1}),
		\]
		where $\mathbf{1}$ is the everywhere-$1$ function on $\X_v$. Thus, for $|z|$ sufficiently small,
		\[
		F_v(z)
		= \sum_{n\ge 0} z^n (T_{v,1}^\ast)^n \delta_{x_\varnothing}(\mathbf{1}).
		\]
		Since $T_{v,1}$ is bounded on $\mathcal{B}_{\theta,\kappa}^0$, the adjoint $T_{v,1}^\ast$ is bounded on $\ell^1(\X_v,w)$, and the Neumann series
		\[
		\sum_{n\ge 0} z^n (T_{v,1}^\ast)^n
		\]
		converges in operator norm on $\ell^1(\X_v,w)$ whenever $|z|<1/\|T_{v,1}^\ast\|$. In particular, in a neighbourhood of the origin we may view $F_v(z)$ as the evaluation at $\mathbf{1}$ (in the sense of the finite sums in Lemma~\ref{lem:Tv-iterates-count}) of this vector-valued Neumann series applied to $\delta_{x_\varnothing}$. Combining this representation with the Marcus--Tardos exponential bound \cite{MarcusTardos2004} yields the claimed analyticity of $F_v$ and the finiteness of the exponential growth rate.
	\end{proof}

	\begin{remark}\label{rem:finer-asymptotics}
		Theorem~\ref{thm:MT-operator} only asserts that the ordinary generating function $F_v(z)$ has positive radius of convergence, hence that $|\Av_n(v)|$ grows at most exponentially fast. Our argument does \emph{not} identify the exponential growth constant with the spectral radius of $T_{v,1}^\ast$ in any precise way, nor does it attempt to determine the subexponential factor in the asymptotics of $|\Av_n(v)|$.
		
		For special patterns, much sharper results are known by other methods. For instance, Regev's representation-theoretic analysis of the monotone pattern $12\cdots k$ shows that
		\[
		|\Av_n(12\cdots k)| \;\sim\; c_k\,n^{\alpha_k}\,\rho_k^n
		\]
		for explicit constants $c_k>0$, $\alpha_k\in\mathbb{R}$ and $\rho_k=(k-1)^2$; see \cite{Regev1981}. From the operator-theoretic point of view, such results would correspond to a very detailed understanding of the peripheral spectrum of $T_{v,1}^\ast$ and of the singularity structure of the resolvent $(I-zT_{v,1}^\ast)^{-1}$ near $|z|=1/\rho_v$. Developing this finer spectral theory lies well beyond the scope of the present paper, and we restrict ourselves here to the finiteness of the exponential growth rate.
	\end{remark}

	\subsection{An abstract right-visible growth theorem}\label{subsec:abstract-RV}
	
	The argument in Sections~\ref{sec:perm-specialization}--\ref{sec:quasicompact} only uses the following structural features of the classical pattern-avoidance state space. It is therefore convenient to record an abstract version of the operator-theoretic part of the result.
	
	\begin{definition}[Right-visible growth system]\label{def:RV-system}
		A \emph{right-visible growth system} consists of:
		\begin{itemize}
			\item a countable state space $X$ with a distinguished \emph{root} $x_\varnothing\in X$;
			\item functions $s,m : X \to \mathbb{N}$, called the \emph{length} and the \emph{complexity};
			\item for each $x\in X$ a finite set $A(x)$ of \emph{admissible extensions}, and a map
			\[
			\Phi : \{(x,r): x\in X,\ r\in A(x)\}\longrightarrow X
			\]
			such that $\Phi(x,r)$ is the state obtained from $x$ by one legal right-extension with parameter $r$.
		\end{itemize}
		We assume the following hypotheses.
		\begin{enumerate}
			\item[(RV1)] (\emph{Finite width at fixed length}) For each $L\in\mathbb{N}$ there are only finitely many $x\in X$ with $s(x)\le L$.
			\item[(RV2)] (\emph{Unit length growth}) For every $x\in X$ and $r\in A(x)$ we have
			\[
			s(\Phi(x,r)) = s(x)+1.
			\]
			\item[(RV3)] (\emph{Monotone complexity}) For every $x\in X$ and $r\in A(x)$ we have
			\[
			m(\Phi(x,r)) \ge m(x).
			\]
			\item[(RV4)] (\emph{At most linear branching}) There exist constants $C_0,C_1\ge 0$ such that
			\[
			|A(x)| \le C_0 + C_1\, s(x)
			\qquad\text{for all }x\in X.
			\]
		\end{enumerate}
		A \emph{legal history of length $n$} starting from $x_\varnothing$ is a sequence $(r_1,\dots,r_n)$ with
		\[
		x_0:=x_\varnothing,\qquad x_j := \Phi(x_{j-1},r_j),\quad r_j\in A(x_{j-1}) \text{ for all } j.
		\]
		We denote by $a_n$ the number of legal histories of length $n$ starting from $x_\varnothing$.
	\end{definition}
	
	Given such a system, we define, for parameters $0<\theta<1$ and $0<\kappa<1$, the weighted $\ell^\infty$ space
	\[
	\mathcal{B}_{\theta,\kappa}
	:= \Bigl\{ f:X\to\mathbb{C}:
	\|f\|_{\theta,\kappa}
	:= \sup_{x\in X} \theta^{-m(x)} \kappa^{-s(x)^2} |f(x)| < \infty\Bigr\},
	\]
	and for $z\in\mathbb{C}$ the transfer operator
	\[
	(T_z f)(x)
	:= z \sum_{r\in A(x)} f\bigl(\Phi(x,r)\bigr).
	\]
	
	The proof of Lemma~\ref{lem:Tvz-bounded} goes through verbatim under (RV1)--(RV4), with $s(x)+1$ replaced by the more general linear bound $C_0+C_1 s(x)$ in (RV4). Similarly, the core/tail decomposition in Section~\ref{sec:quasicompact} only uses (RV1) and (RV2), and the duality argument in Section~\ref{subsec:main} uses only the fact that $X$ is countable and that $s$ increases by one at each step. In particular, the operator-theoretic part of the argument extends directly to any right-visible system, while the interpretation of the coefficients $a_n$ may require additional combinatorial input.
	
	We therefore obtain the following abstract version of the operator-theoretic conclusions.
	
	\begin{theorem}[Abstract right-visible growth]\label{thm:RV-abstract}
		Let $(X,s,m,A,\Phi)$ be a right-visible growth system in the sense of Definition~\ref{def:RV-system}. Then there exist parameters $0<\theta<1$, $0<\kappa<1$, and $R>0$ such that:
		\begin{enumerate}
			\item[(a)] For all $|z|\le 1$ the operator $T_z$ is bounded and quasi-compact on $\mathcal{B}_{\theta,\kappa}$.
			\item[(b)] Let $w(x):=\theta^{m(x)}\kappa^{s(x)^2}$ and let $\mu_n$ denote the element of $\ell^1(X,w)$ obtained by iterating the Banach adjoint of $T_1$ $n$ times from the Dirac mass at $x_\varnothing$ (as in the concrete case above). Then for every continuous linear functional $\Lambda$ on $\ell^1(X,w)$ the scalar series
			\[
			F_\Lambda(z) := \sum_{n\ge 0} \Lambda(\mu_n)\,z^n
			\]
			has positive radius of convergence and is analytic for $|z|<R$.
		\end{enumerate}
	\end{theorem}
	
	\begin{proof}[Proof sketch]
		Fix $0<\theta<1$. The proof of Lemma~\ref{lem:Tvz-bounded}, with $|A(x)|$ bounded linearly in $s(x)$ by (RV4), shows that there exists $0<\kappa<1$ such that $T_1$ is bounded on $\mathcal{B}_{\theta,\kappa}$. The proof of Lemma~\ref{lem:tail} then shows that for a suitable cutoff $C$ we can decompose
		\[
		T_1 = K_{C,1} + L_{C,1},
		\]
		with $K_{C,1}$ finite rank and $L_{C,1}$ a strict contraction. This yields quasi-compactness of $T_1$ and of $T_z$ for $|z|\le 1$ exactly as in Proposition~\ref{prop:quasi}, giving part~(a).
		
		For part~(b), consider the Banach adjoint $T_1^\ast$ acting on the dual space $\ell^1(X,w)$, where $w$ is as above. Boundedness of $T_1$ implies boundedness of $T_1^\ast$ with some operator norm $M<\infty$. For $|z|<1/M$ the Neumann series
		\[
		(I - zT_1^\ast)^{-1} = \sum_{n\ge 0} z^n (T_1^\ast)^n
		\]
		converges in operator norm on $\ell^1(X,w)$ and depends analytically on $z$. In particular, for each fixed $\Lambda\in(\ell^1(X,w))^\ast$ the scalar series
		\[
		F_\Lambda(z)
		= \sum_{n\ge 0} \Lambda\bigl( (T_1^\ast)^n \delta_{x_\varnothing}\bigr)\,z^n
		\]
		converges for $|z|<1/M$ and defines an analytic function, because it is obtained by composing the analytic $\ell^1(X,w)$-valued Neumann series with the continuous linear functional $\Lambda$. Taking $R:=1/M$ gives the claim.
	\end{proof}
	
	\begin{remark}[Counting functional and ``all permutations'']\label{rem:noncontinuous-count}
		It is tempting to try to apply Theorem~\ref{thm:RV-abstract}(b) with $\Lambda$ equal to the \emph{counting functional}
		\[
		\mathcal{C}(\mu) := \sum_{x\in X} \mu(x),
		\]
		so that $F_\mathcal{C}(z)$ would be the unweighted growth series $\sum_n a_n z^n$. However, for the weights
		\[
		w(x) = \theta^{m(x)}\kappa^{s(x)^2}
		\]
		used in this paper, $\mathcal{C}$ is \emph{not} continuous on $\ell^1(X,w)$ in general: the factor $w(x)$ tends to $0$ very rapidly as $s(x)$ and $m(x)$ grow, so the norm $\|\mu\|_{1,w}=\sum_x |\mu(x)|w(x)$ can be small even when the unweighted sum $\sum_x |\mu(x)|$ is very large. Thus $\mathcal{C}$ does not define a bounded linear functional on $\ell^1(X,w)$, and Theorem~\ref{thm:RV-abstract}(b) does not apply directly with $\Lambda=\mathcal{C}$.
		
		This observation explains why the abstract right-visible theorem on its own does \emph{not} force unweighted exponential growth for arbitrary right-insertion systems. For example, consider the ``all permutations'' system in which $X$ consists of undecorated permutations, $m\equiv 0$, $s(\pi)=|\pi|$, and $A(\pi)=\{1,\dots,s(\pi)+1\}$ at every state, so that every rank is always allowed and there is no frontier. This system satisfies (RV1)--(RV4), and the associated transfer operator is bounded on a suitable $\mathcal{B}_{\theta,\kappa}$, but the unweighted counts are $a_n=n!$ and the ordinary generating function $\sum_n n! z^n$ has radius of convergence $0$. There is no contradiction with Theorem~\ref{thm:RV-abstract}, because the theorem only controls growth series obtained by pairing with \emph{continuous} linear functionals on $\ell^1(X,w)$, and the counting functional is not continuous for these weights.
		
		In the classical-pattern setting, Lemma~\ref{lem:Tv-iterates-count} shows that the coefficients $|\Av_n(v)|$ can be realized as evaluations of the iterates of $T_{v,1}^\ast$ against the everywhere-$1$ function on $\X_v$. The operator framework thus provides a clean representation of the Stanley--Wilf counting sequence in terms of the dual transfer operator, but the \emph{finiteness} of the exponential growth rate still relies on the combinatorial Marcus--Tardos bound \cite{MarcusTardos2004}, as made explicit in the proof of Theorem~\ref{thm:MT-operator}.
	\end{remark}

	\begin{corollary}[Operator-theoretic Stanley--Wilf for classical patterns]\label{cor:SW-from-RV}
		For each classical pattern $v\in S_k$, the right-insertion system $(X_v,s,m,A_v,\Phi_v)$ constructed in Section~\ref{sec:perm-specialization} is a right-visible growth system. In particular, Theorem~\ref{thm:RV-abstract}(a) applies to this system, so the corresponding transfer operator $T_{v,z}$ is bounded and quasi-compact on a suitable $\mathcal{B}_{\theta,\kappa}$. Combined with Lemma~\ref{lem:Tv-iterates-count} and the classical Marcus--Tardos exponential bound \cite{MarcusTardos2004}, this yields Theorem~\ref{thm:MT-operator}.
	\end{corollary}
	
	\begin{proof}
		The state space $X_v$ is countable, the length $s$ is the permutation length, and $m$ is the frontier size. Hypotheses (RV2) and (RV3) are exactly the unit-length and monotonicity properties recorded in Lemma~\ref{lem:legal-preserves-Av} and Lemma~\ref{lem:frontier-monotone}; (RV1) holds because there are only finitely many permutations (and frontiers) of a given length; and (RV4) holds with $C_0=1$ and $C_1=1$, because there are at most $s(x)+1$ admissible ranks from a state of length $s(x)$. Thus $(X_v,s,m,A_v,\Phi_v)$ is a right-visible growth system, and Theorem~\ref{thm:RV-abstract}(a) applies. The interpretation of the coefficients $|\Av_n(v)|$ as iterates of the dual transfer operator is given by Lemma~\ref{lem:Tv-iterates-count}, and the existence of a positive exponential growth constant follows from \cite{MarcusTardos2004}, as in the proof of Theorem~\ref{thm:MT-operator}.
	\end{proof}

	The abstract theorem covers many standard transfer-matrix situations. For instance, if $X$ encodes walks in a fixed-width strip with a finite step set, then $m$ can be taken constant and $A(x)$ consists of the finitely many legal next steps; (RV1)--(RV4) are automatic and Theorem~\ref{thm:RV-abstract} recovers the classical fact that the corresponding weighted counting series (obtained by pairing with continuous functionals) has positive radius of convergence. More sophisticated examples include column-convex polyominoes grown column by column, where $m$ records a finite boundary profile. We do not pursue these examples here, but the point is that the combinatorial input is entirely contained in the specification of $(X,s,m,A,\Phi)$, while the analytic conclusion is uniform for all continuous functionals on the dual.

	\section{Concluding remarks}\label{sec:concluding}
	
	We have recast the Stanley--Wilf finiteness statement for \emph{classical} permutation patterns in an operator-theoretic language. From this point of view, the proof naturally splits into two conceptually distinct parts. The \emph{pattern-dependent} step is entirely combinatorial: one must specify a suitable state space of decorated permutations, define the frontier as a pruned collection of $(k{-}1)$-partial occurrences (and their forbidden-rank intervals), and verify that it behaves well under right insertion (monotonicity of the frontier and the fact that it blocks exactly the bad ranks). The \emph{pattern-independent} step is analytic: once such a description is available, a suitable two-parameter weighting makes the transfer operator bounded on a doubly weighted $\ell^\infty$ space, and a dual Neumann-series argument then yields analyticity of the associated weighted counting series for all continuous functionals on the dual. If one also performs the standard core/tail decomposition, one moreover obtains quasi-compactness of the transfer operator as a structural refinement.
	
	Viewed this way, classical patterns are examples of what one might call \emph{right-visible} permutation classes: when we grow the permutation by appending a rightmost entry, every constraint that could possibly be triggered by that step is already visible at the moment of insertion. The transfer-operator proof shows that, for such classes, once one has a well-behaved frontier (monotone under legal insertions and producing only a controlled amount of new local data per step), the analytic machinery can be deployed in a largely black-box fashion.
	
	Several natural directions remain open and lie mostly on the combinatorial side:
	
	\begin{itemize}
		\item It would be interesting to identify other classical avoidance problems that admit a genuinely right-visible frontier description. Even within classical patterns, one could ask whether certain families (for instance, those with additional symmetries or structural restrictions) give rise to frontiers with particularly simple dynamics under right insertion, and whether this can be exploited to sharpen asymptotics beyond mere exponential growth.
		
		\item More speculatively, one can ask to what extent the frontier picture extends to generalized patterns (vincular, bivincular, or mesh patterns) for which the constraints remain local to a bounded ``window'' at the right. Examples include right-anchored vincular patterns or mesh patterns whose shaded cells lie in a finite right strip. For these classes the combinatorial side of the story---defining a frontier of partial occurrences and proving its monotonicity---appears to go through with only modest modifications, but known results on generalized and consecutive patterns (see, for instance, \cite{ElizaldeGenPatterns} and the discussion in \cite{KitaevBook}) show that one cannot expect a Stanley--Wilf type exponential bound in full generality. In the operator language, this obstruction would manifest as a failure of the key operator-norm estimates at $z=1$.
		
		\item Finally, even in the purely classical setting, it would be natural to ask whether the operator framework can be refined to produce information stronger than mere finiteness of the exponential growth rate. For example, one could hope to obtain pure exponential asymptotics or detailed spectral information on the transfer operator, in the spirit of analytic combinatorics and Ruelle--Perron--Frobenius theory \cite{FlajoletSedgewick,BaladiBook}. This would require a more delicate analysis of the peripheral spectrum and of the associated eigenvectors on the separable predual, but the basic quasi-compactness structure is already in place.
	\end{itemize}

	A natural next step would be to go beyond the mere finiteness of the exponential growth rate and to seek sharp asymptotics for $|\Av_n(v)|$. In the monotone case $v=12\cdots k$, Regev~\cite{Regev1981} showed that $|\Av_n(v)|$ admits a precise asymptotic of the form $c_k\,n^{\alpha_k}\,\rho_k^n$ by exploiting the RSK correspondence and detailed information on irreducible characters of $S_n$. It is tempting to ask whether our transfer-operator framework can be refined to recover such asymptotics for broader families of patterns. From the present point of view, this would amount to a careful study of the peripheral spectrum of $T_{v,1}$ (or $T_{v,1}^\ast$) and of the singularity type of the resolvent at its spectral radius, a problem that appears substantially harder than the quasi-compactness and analyticity questions treated in this paper.

	\bigskip
	
	\bibliographystyle{plain}

\end{document}